\theoremstyle{plain}
\theoremstyle{plain}
\newtheorem{theorem}{Theorem}
\newtheorem{lem}{Lemma}[section]
\newtheorem{lemma}[lem]{Lemma}
\newtheorem{proposition}[lem]{Proposition}
\newtheorem{thm-n}[lem]{Theorem}
\theoremstyle{remark}
\newtheorem{rem}[lem]{Remark}
\newtheorem*{rem*}{Remark}
\newtheorem*{notat*}{Notation}
\newtheorem*{exm*}{Example}
\theoremstyle{definition}
\newcommand{\ind}{{\rm ind}}
\newcommand{\Mon}{{\rm Mon}}
\newcommand{\soc}{{\rm soc}}
\newcommand{\orb}{{\rm Orb}}
\newcommand{\U}{{\rm OR}}
\newcommand{\M}{{\rm mfpr}}
\newcommand{\mC}{{\mathbb C}}
\newcommand{\Aut}{{\rm Aut}}
\newcommand{\PSL}{{\rm PSL}}
\newcommand{\PSU}{{\rm PSU}}
\newcommand{\PSp}{{\rm PSp}}
\newcommand{\PO}{{\rm P\Omega}}
\newcommand{\EILIDH}[1]{\textcolor{red}{#1}}
\newcommand{\SPENCER}[1]{\textcolor{blue}{#1}}
\begin{document}

\title[Low-genus primitive monodromy groups of type B]{Low-genus primitive monodromy groups with a nonunique minimal normal subgroup}
%Genus one monodromy groups of type B}
\def\technion{Department of Mathematics, Technion - Israel Institute of Technology, Haifa, Israel}
\author{Spencer Gerhardt}
\address{Department of Mathematics, University of Southern California, Los Angeles, CA, USA.}
\email{sgerhard@usc.edu}
\author{Eilidh McKemmie}
\address{Department of Mathematical Sciences, Kean University, Union, NJ, USA.}
\email{emckemmi@kean.edu}
\author{Danny Neftin}
\address{\technion}
\email{dneftin@technion.ac.il}%

\begin{abstract}
    Let $X$ be a Riemann surface, and let $f:X\to\mathbb P^1_\mC$ be an indecomposable (branched) covering of genus $g$ and degree $n$ whose monodromy group has more than one minimal normal subgroup. 
    Closing a gap in the literature, we show that there is only one such covering when $g\leq 1$. Moreover, for arbitrary $g$, there are no such coverings with $n\gg_g 0$ sufficiently large.  
\end{abstract}

\maketitle
%\thanks{This work was partially supported by the AIM SQuaREs program.}

\section{Introduction}

Fix an integer $g\geq 0$, and consider  degree-$n$ (branched) coverings $f:X\to\mathbb P^1_{\mathbb C}$ of the Riemann sphere $\mathbb P^1_{\mathbb C}$ by a (connected compact) Riemann\footnote{Alternatively, throughout, $f$ can be picked to be a morphism from a smooth projective algebraic curve $X$.} surface $X$ of genus $g_X=g$.  The classification of monodromy groups aims to determine those coverings $f$ whose monodromy group $G=\Mon_\mC(f)\leq S_n$ is 
not (the generically occurring) $A_n$ or $S_n$ when $n\gg_g 0$ is sufficiently large. %, the ones occuring generically. 
This classification has far-reaching implications throughout mathematics, some of which are discussed in \cite[\S 1, pg.\ 3]{NZ}. As covers of genus $g=0$ or $1$ play a key role in such implications,  the classification in such genera, known as the genus-$0$ program, furthermore seeks to determine the coverings {\it in all degrees $n$} with monodromy group $\neq A_n,S_n$. % for $g=0$ and $1$

For decomposable maps,  $\Mon(f)$ is clearly a subgroup of the stabilizer $S_d\wr S_{n/d}$, $1<d<n$, of a nontrivial partition of $\{1,\ldots,n\}$, and hence smaller than  $A_n$ or $S_n$. Henceforth, the classfication restricts to coverings $f$ as above that are {\it indecomposable}, that is, cannot be written as $f=g\circ h$ for coverings $g,h$ of degrees $>1$. Such coverings have {\it primitive} monodromy groups $G\leq S_n$, that is, transitive groups that do not preserve any nontrivial partition of $\{1,\ldots,n\}$.  

\begin{comment} Since we are assuming $X$ is connected, $G$ must be a transitive permutation group \cite[Section~1]{GuralnickThompson_1990}. In addition, $G$ will be primitive if the cover $\psi$ is primitive, i.e. if $\psi$ does not factor nontrivially through any surface. 
\end{comment}

The Aschbacher--O'Nan--Scott structure theory   divides primitive groups $G\leq S_n$ into several families A-C, see \cite[Thm.\ 11.3]{guralnick2003monodromy} or \cite{GuralnickThompson_1990}. The (primitive) groups of type B are those admitting more than one minimal normal subgroup. 
The  type-B  (primitive) monodromy groups of genus-$0$ indecomposable coverings $f$ were determined by Shih \cite{Shih_1991}, up to a few small gaps noted below. Low-degree type-B genus-$1$ coverings $f$ were computed by Salih, see 
\cite{Salih2023}. Moreover, it was generally believed that Shih's proof should extend to genus-$1$ covers and, for $n\gg_g 0$, to arbitrary genus $g$. However, so far, such a proof has not appeared in the literature.  

\begin{comment} 
As the composition factors of monodromy groups are contained in the composition factors of primitive groups, Aschbacher and Scott's classification of finite primitive groups into types $(A),(B),(C1),(C2)$, and $(C3)$ \cite{Aschbacher_1985} provides a key tool for studying the structure theory of monodromy groups. For instance, the Guralnick-Thompson conjecture \cite{GuralnickThompson_1990}, which states the collection of composition factors for all primitive genus $g_X$ systems contains only cyclic groups, alternating groups, and finitely many other simple groups, was proved by showing this property holds for all primitive genus $g_X$ systems of types $(A),(B), (C1),(C2)$ and $(C3)$
\cite{ FrohardtMagaard_01}. 
 
In a different direction, Riemann's existence theorem can also be used to classify monodromy groups of small genus. For instance,
\cite{Aschbacher_1990} classifies the primitive genus $0$ and genus $1$ systems of type $(C2)$, \cite{Neubauer_1993} describes the 
primitive genus $0$ and $1$ systems of type $(A)$, and \cite{Shih_1991} shows there are no genus 0 systems of type $(B)$. Our paper contributes this project, classifying the genus $1$ systems of type $(B)$. The main result shows there is a unique genus $1$ monodromy system of type $(B)$, with monodromy group ${\rm PSL}_2(7).2$, appearing for a $168$-sheeted cover $\psi$.  
\end{comment}

In this note we close the above gaps in the literature by extending Shih's argument,  thereby completing the classification of type-B monodromy groups. We denote by $A.B$ a group extension of $B$ by $A$. % by $C_n$ the cyclic group o.  %that is, the conjugacy classes $C_1,\ldots,C_r$ of inertia groups
\begin{theorem}\label{main}
Let $f:X\to\mathbb P^1_{\mathbb C}$ be an indecomposable  degree-$n$ covering  of genus $g_X< \max\{2,n/5000\}$ whose monodromy group $G:=\Mon_{\mathbb C}(f)$ contains more than one minimal normal subgroup. %and type-B monodromy group $G$. 
Then $n=168$ %the ramification of $f$ is  given in \ref{thm:}, 
and   $G\cong {\rm PSL}_2(7)^2.C_2$. % is a quadratic extension
%    If $G$ is of type $(B)$ and admits a genus $1$ system $(G,S,\Omega)$ then $n=168$, $S$ has type $(2,3,8)$ and $G\cong {\rm PSL}_2(7).2$.
%    \EILIDH{Do we want to change to include c, or perhaps add another theorem for possibly larger genus?} \DN{Yes, I think we can include $c$ here. }
\end{theorem}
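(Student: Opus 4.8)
The plan is to extend Shih's strategy, combining the Aschbacher--O'Nan--Scott description of type-B groups with uniform fixed-point-ratio bounds and the Riemann--Hurwitz constraint. First I would record the structure: a primitive $G\le S_n$ with more than one minimal normal subgroup has exactly two, say $M$ and $N$, which are nonabelian, isomorphic to $T^k$ for a nonabelian simple group $T$, satisfy $C_G(M)=N$, and both act regularly; hence $n=|T|^k$, $\soc(G)=M\times N$, and $G/\soc(G)$ embeds into a bounded group assembled from $\Out(T)$, $S_k$, and the (possible) swap of $M$ and $N$. Identifying $\Omega$ with $T^k$, the point stabilizer meets $\soc(G)$ in a diagonal subgroup acting by (componentwise) conjugation. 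Fixing branch cycles $x_1,\dots,x_r\in G\setminus\{1\}$ with $x_1\cdots x_r=1$ and $\langle x_1,\dots,x_r\rangle=G$, I would use the identity $\sum_{i=1}^r\ind(x_i)=2(n+g_X-1)$, where $\ind(x)=n-\orb(x)$.

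The analytic engine is the maximal fixed-point ratio $\M(G):=\max_{1\ne x\in G}\mathrm{fpr}(x)$, $\mathrm{fpr}(x)=\mathrm{fix}(x)/n$, whose key property is $\M(G)\to0$ as $|T|\to\infty$: translation parts are fixed-point-free, diagonal parts fix only a centralizer, and parts carrying an outer or swap component fix only a twisted (Lang-type) subgroup, so $\mathrm{fix}(x)$ is always at most the order of a proper subgroup of $T^k$ of index growing with $n$. Combining the elementary bound $\ind(x)\ge\tfrac12(n-\mathrm{fix}(x))\ge\tfrac n2(1-\M(G))$ with the Riemann--Hurwitz identity and $g_X<\max\{2,n/5000\}$ yields $g_X\ge n\bigl(\tfrac r4(1-\M(G))-1\bigr)+1$. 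For $r\ge5$ and $\M(G)$ small this lower bound exceeds the permitted $n/5000$, a contradiction; quantitatively $r\ge5$ forces $\M(G)>\tfrac15-o(1)$, which bounds $n=|T|^k$ by an explicit constant. The threshold $5000$ is calibrated precisely so that this step closes using the available fixed-point-ratio bounds.

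The crude bound is vacuous for $r\le4$, where the danger is tuples of near-involutions whose indices total about $2n$ and produce small genus. Here I would follow Shih's finer bookkeeping. Since $\langle x_1,\dots,x_r\rangle=G$, the tuple must surject onto $G/\soc(G)$ and its socle-content must generate each of the regular factors $M,N$; this forces some $x_i$ to lie outside $\soc(G)$ and separates the contributions to $\sum\ind(x_i)$ into translation parts (homogeneous, of index $n(1-1/\mathrm{ord})$), diagonal parts, and twisted parts, which the product-one relation then constrains sharply, bounding $|T|^k$ and the admissible class vectors. For each surviving pair $(T,k)$ and admissible $r$ I would enumerate the ramification types $(C_1,\dots,C_r)$ with index sum $2(n+g_X-1)$ for $g_X$ in range, and test realizability by the Frobenius count $\frac{|C_1|\cdots|C_r|}{|G|}\sum_\chi\frac{\chi(C_1)\cdots\chi(C_r)}{\chi(1)^{r-2}}$ of product-one tuples, refined by a M\"obius/inclusion argument over the subgroup lattice to guarantee generation of all of $G$. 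As Shih already excludes $g_X=0$, this should leave exactly one genus-$1$ system, realized by a non-homogeneous class vector (whose fixed points come from a twisted centralizer) with $T=\PSL_2(7)$, $k=1$, $n=168$, and $G\cong\PSL_2(7)^2.C_2$.

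The main obstacle is twofold. First, the uniform bound $\M(G)\to0$ with a constant compatible with the threshold $n/5000$ must be established across every family of finite simple groups, and for the holomorph action one must control not only conjugation-type elements but the more subtle elements carrying a field/graph automorphism or a swap component, whose fixed sets are twisted centralizers rather than ordinary ones. Second, the case $r=4$ together with the residual small socles (notably $A_5$, $A_6$, $\PSL_2(7)$, $\PSL_2(8)$, and the smallest compound case $T=A_5$, $k=2$ with $n=3600$) lies below the reach of the asymptotic estimates and requires direct genus computations, the verification that $\PSL_2(7)$ in degree $168$ is the unique survivor being the most laborious step.
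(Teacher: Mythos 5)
Your overall strategy---the Aschbacher--O'Nan--Scott description of type-B groups, fixed-point-ratio bounds fed into Riemann--Hurwitz to cap the number of branch points and the ramification data, then a computational finish on a residual finite list---is exactly the route the paper takes (following Shih), and your structural observations (two regular minimal normal subgroups, $n=|T|^k$, diagonal point stabilizer) are correct. But as written the proposal has a genuine gap at its core. You lean on ``Shih already excludes $g_X=0$,'' and that is circular here: the paper's starting point is that Shih's genus-$0$ argument itself has gaps---his estimate (4.16) does not rule out types $(2,3,7)$, $(2,3,10)$, $(2,3,12)$---so genus $0$ is not settled in the literature and must be re-proved together with genus $1$. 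Relatedly, the ``finer bookkeeping for $r\le 4$,'' which is where essentially all the work lives, is only gestured at: you never carry out (i) the reduction of the types $(k,\ell,m)$ to an explicit finite list, (ii) the reduction from $\soc(G)\cong L^{2t}$ to $t=1$, which for the three problematic types requires order-by-order fixed-point-ratio estimates on the powers of $x_3$ via the permutation $\rho$ of the $t$ copies of $L$, not just a crude $\M(G)$ bound, or (iii) the passage from the qualitative statement ``$\M(G)\to 0$'' to an explicit finite list of socles. For (iii) the paper needs, for instance, that every order-$7$ element of $\Aut(L)$ satisfies $|x^L|\ge 89$ outside $\PSL_2(7)$ and $\PSL_2(8)$, a CFSG-based verification across all families (Burness, Burness--Thomas, plus GAP for the small cases); and the constant $1/5000$ is calibrated to this $(2,3,7)$ class-size analysis, not to the elimination of $r\ge 5$, which already follows from the uniform bound $\M(x)\le 1/10$ with no asymptotics.

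Two smaller points. Your inequality $\ind(x)\ge \frac n2(1-\M(G))$ is correct and does kill $r\ge 5$, but $r=4$ is not ``below the reach'' of the estimates: the paper disposes of it with the order-dependent bounds $\U(x)\le 11/20,\,11/30,\,13/40$, leaving only $L=A_5$ with type $(2,2,2,3)$ for the computer. And your Frobenius-count-plus-M\"obius realizability test is a legitimate alternative to the paper's direct Magma enumeration of generating product-one tuples (and, for $A_8$ and $\PSU_4(2)$, its cheaper check that the Riemann--Hurwitz condition already fails on class representatives); either would work, but neither is executed here. In short, the skeleton matches the paper, but the quantitative content that constitutes the proof is missing.
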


In fact, we'll see there are two possible ramification types for such degree-$168$ coverings, see Remark \ref{ram}. The proof follows \cite{Shih_1991} closely. When   adjustments  are required, we detail the new argument. Some of these arguments involve invoking the classification of finite simple groups. 
Note that the constant $1/5000$ appearing in Theorem~\ref{main} arises in restricting the possible monodromy groups of coverings $f$ with three branch points of ramification indices $2,3$ and $7$, and can be taken to be bigger for other types of ramification. Throughout the proof we list the relevant constants to each part of the argument explicitly.
%but it is already reasonable in view of other constants provided by the classification.}

Groups of type B have two isomorphic minimal normal subgroups, each isomorphic to a power $L^t$, $t\geq 1$ of a (nonabelian) simple group $L$. Propositions \ref{prop:br}--\ref{prop:t=1}
%, together with the computer computation in Proposition \ref{prop:Spencer}, 
allow us to restrict to:
\begin{itemize}
    \item[1)] covers $f$ with three branch points $P_1,P_2,P_3$;
    \item[2)] a short explicit (finite) list of ramification indices $e_1,e_2,e_3$ for the Galois closure of $f$ over $P_1,P_2,P_3$;
    \item[3)] the case $t=1$, so that $L^2\leq G\leq \Aut(L)^2$.
\end{itemize}

These reductions use upper bounds on the ratios between the number of fixed points of a group element and the degree $n$, also known as {\it fixed point ratios}. The bounds in  \cite{Aschbacher_1990} and \cite{Shih_1991} suffice for these reductions. 
However, to treat the case $t=1$ in Propositions \ref{prop: socle} and \ref{prop:Spencer},  
we apply newer bounds from the work of Burness and Thomas \cite{Burness_2007, Burness2021}. 
In this case, upper bounds on fixed point ratios come from lower bounds on conjugacy classes in $L$, and these are provided by \cite{Burness_2007, Burness2021} and further computations that we carry out. Moreover, we replace most of Shih's computations for this case \cite[4.24-4.35]{Shih_1991}  by automated computer checks. Our code is available at \url{https://neftin.net.technion.ac.il/files/2025/01/b-code.zip} and in the ancillary files of this paper at \url{https://arxiv.org/abs/2501.15538}.
%recorded in \cite{code}. 
%of the argument reduces  such as in cases where $f$ has three branch points with ramification indices $2,3,7$, 

%replacing \cite{Shih_1991}
%It is well known that coverings of fixed genus and  monodromy group $G\leq S_n$ correspond to tuples $x_1,\ldots,x_r$ with product $1$ that generate $G$ with restrictions on the total number of orbits of $x_1,\ldots,x_r$, cf.\ \S \ref{sec:prelim}. 

{\it Acknowledgments.} D.\ N.\  is grateful for the support of the Israel Science Foundation, grant no.\  353/21. This work was partially supported by the AIM SQuaREs program. Computer calculations were carried out using GAP \cite{GAP4} and MAGMA \cite{Magma}. 

\section{Preliminaries}\label{sec:prelim}
{\it Notation.} 
Throughout the paper $G$ is a primitive group        acting on a finite set $\Omega$ of size $n$. The socle $\soc(G)$ is the group generated by its minimal normal subgroups. 
For $x\in G$, let $|x|$ denote the order of $x$, $\orb(x)$  the number of orbits of $x$ on $\Omega$, and by $f(x)$ the number of fixed points of $x$ on $\Omega$. The fixed point ratio is ${\rm fpr}(x)=f(x)/n$. For a subset $S\subseteq G$, we denote by $\orb(S)$ the sum $\sum_{s\in S}\orb(s)$. For  $x\in G$, denote by $x^G$ its conjugacy class. 
Let $\phi$ denote Euler's totient function.

\noindent {\it Monodromy.}
Given a group $G$ acting on a set $\Omega$, and a tuple $S$ of elements $x_1,\ldots,x_r\in  G$ with product $x_1x_2\ldots x_r=1$ generating $G=\langle S\rangle$, the tuple $(G,S,\Omega)$ is called a {\it (product-$1$)  system}. 
 %As in 
 By Riemann's existence theorem (RET), for every such system $(G,S,\Omega)$ and points $P_1,\ldots,P_r\in\mathbb P^1(\mathbb C)$, there exists a covering 
 $f:X\to\mathbb P^1_{\mathbb C}$ with monodromy group 
 $G$ acting on $\Omega$, with branch locus  $\{P_1,\ldots,P_r\}$, such that the branch cycle over $P_i$ is conjugate to $x_i$ in $G$, for every $i=1,\ldots,r$.
The ramification type corresponding to the system is then the multiset of conjugacy classes $C_1,\ldots,C_r$ of $x_1,\ldots,x_r$, resp.
 %induces a {\it , where . 
 
The genus $g_X$ of $X$ then satisfies the Riemann--Hurwitz formula:
$$2(g_X-1)=-2n+\sum_{i=1}^r(n-\orb(x_i)),$$
or equivalently, 
$\U(S)=\#S-2+{2(1-g_X)}/{n}$, where the orbit ratio $\U(x):={\orb(x)}/{n}$ is the ratio between the orbit length of $x$ and $n$, and $\U(S)=\sum_{i=1}^r \U(x_i)$.  Note that $\U$  is denoted by $\mathcal U$ in \cite{Shih_1991}.
 % for any $S=\{x_1,\ldots,x_r\}\subseteq G$.. 
 
 We say that a system $(G,S,\Omega$) is a {\it genus-$g$ system}  if indeed $\U(S)=\#S-2+{2(1-g)}/{n}$, so that the genus of the covering space given by RET is indeed $g$. 
        %\begin{itemize}
            %\item $x_i\neq 1$, for $1\leq i\leq r$
            %\item 
        %\end{itemize}
        Moreover we say $S$ \textit{has type }$(|x_1|, ..., |x_r|)$ where $|x_1|\le |x_2| \le \cdots \le |x_r|$.
%\begin{notat*}
%We provide a list of all our notation for the reader's reference.
    %\begin{itemize}
        %\item $G$ is a transitive subgroup of $Sym(\Omega)$, and is of type $(B)$.
        % \item 
        %\item 
        %\begin{itemize}
            %\item 
            %\item 
            %\item 
        %\end{itemize}
 %       \item 
 %       \item $
    %\end{itemize}
%\end{notat*}

\noindent {\it Primitive groups.} 
We  consider  groups $G$ of Aschbacher-Scott type B, that is, $G$ has two  nonabelian minimal normal subgroups both isomorphic to $L^t$ where $t\ge 1$ and $L$ is a nonabelian simple group \cite[Theorem~11.2(ii)]{guralnick2003monodromy}. In this case, $\soc(G) \cong L^{2t}$.
 The action of $G$ {is transitive with stabilizer  a diagonal copy of $L^t$ in $(L^t)^2$, so that each of the minimal normal subgroups $L^t$ act regularly, and }%is on a diagonal copy of $L$ in the socle, 
  $n=|L|^t$.

\noindent {\it Preliminary lemmas.} We shall use the following lemmas. For $x\in G\setminus\{1\}$, let  $\M(x)=\max\left\{\text{fpr}(x^i) \,\middle|\, 1\le i < |x|\right\}$ denote the maximal fixed-point ratio among nontrivial elements in $\langle x\rangle$. Note $\M(x)$ is denoted by $\mathcal M(x)$ in \cite{Shih_1991}.

\begin{lemma}[{\cite[3.3]{Aschbacher_1990}}]\label{lem: U formula}
    \begin{align*}
    \orb(x)&=\frac{1}{|x|}\left(\sum_{d\mid |x|}\phi\left(\frac{|x|}{d}\right) f\left(x^d\right)\right), \\
    \U(x)&\le \frac{1}{|x|}\left(1+(|x|-1-\phi(|x|))\M(x)+\phi(|x|)\frac{f(x)}{n}\right)\\
    &\le \frac{1}{|x|}(1+\M(x)(|x|-1)).
    \end{align*}
\end{lemma}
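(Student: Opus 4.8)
The plan is to prove both assertions by applying the orbit-counting lemma (Burnside's lemma) to the cyclic group $\langle x\rangle$ and then regrouping the resulting sum according to the greatest common divisor of the exponent with $|x|$. Write $m=|x|$. First I would note that Burnside's lemma, applied to the action of $\langle x\rangle$ on $\Omega$, gives $\orb(x)=\frac{1}{m}\sum_{i=0}^{m-1}f(x^i)$. The key observation is that $f(x^i)$ depends only on the cyclic subgroup generated by $x^i$, hence only on $d:=\gcd(i,m)$: indeed $\langle x^i\rangle=\langle x^d\rangle$, so these powers have the same fixed-point set and $f(x^i)=f(x^d)$. For each divisor $d\mid m$ there are exactly $\phi(m/d)$ exponents $i\in\{0,\dots,m-1\}$ with $\gcd(i,m)=d$, so collecting terms yields $\orb(x)=\frac{1}{m}\sum_{d\mid m}\phi(m/d)f(x^d)$, which is the first claimed identity.

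For the inequality I would divide through by $n$ to rewrite the identity as $\U(x)=\frac{1}{m}\sum_{d\mid m}\phi(m/d)\,{\rm fpr}(x^d)$, and then isolate the two extreme divisors. The term $d=m$ contributes $\phi(1)\,{\rm fpr}(x^m)={\rm fpr}(1)=1$, and the term $d=1$ contributes $\phi(m)\,{\rm fpr}(x)=\phi(m)f(x)/n$. Every remaining term has $1<d<m$, so $x^d$ is a nontrivial power of $x$ with $1\le d<|x|$, whence ${\rm fpr}(x^d)\le \M(x)$ by definition of $\M$. Bounding each remaining term by $\M(x)$ and using the standard identity $\sum_{d\mid m}\phi(m/d)=\sum_{e\mid m}\phi(e)=m$ to see that the leftover coefficients sum to $m-1-\phi(m)$, I obtain the first inequality. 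The second inequality then follows at once from the further crude bound $f(x)/n={\rm fpr}(x)\le\M(x)$, which merges the $d=1$ term into the $\M(x)$ terms and leaves the combined coefficient $(m-1-\phi(m))+\phi(m)=m-1$.

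The argument is essentially bookkeeping, so I do not expect a genuine obstacle. The only points requiring care are the verification that $f(x^i)$ is constant on the set of exponents with a fixed value of $\gcd(i,m)$, which rests on the identity $\langle x^i\rangle=\langle x^{\gcd(i,m)}\rangle$; the count of $\phi(m/d)$ exponents per divisor; and the correct isolation of the $d=1$ and $d=m$ terms, so that the coefficient of $\M(x)$ comes out as $m-1-\phi(m)$ rather than $m-\phi(m)$ or $m-1$. These Euler-$\phi$ manipulations are exactly where an off-by-one error could creep in, so I would double-check the extreme-divisor contributions against the sum identity.
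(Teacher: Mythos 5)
Your proof is correct and the derivation of the two inequalities (isolating the $d=1$ and $d=|x|$ terms, bounding the middle terms by $\M(x)$, and using $\sum_{d\mid m}\phi(m/d)=m$) is exactly the paper's argument. The only difference is that you also prove the orbit-counting identity via Burnside's lemma and grouping exponents by $\gcd(i,m)$, whereas the paper simply cites it from Aschbacher's work; your extra derivation is sound, including the key point that $f(x^i)$ depends only on $\langle x^i\rangle=\langle x^{\gcd(i,m)}\rangle$.
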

\begin{proof}
    The first equality comes from \cite[3.3]{Aschbacher_1990}. Dividing by $n$, we get
    \begin{align*}
    \U(x)&=\frac{1}{|x|}\left(\sum_{d\mid |x|}\phi\left(\frac{|x|}{d}\right) \frac{f\left(x^d\right)}{n}\right) \\
    &= \frac{1}{|x|}\left(1+\phi(|x|)\frac{f(x)}{n}+\sum_{\substack{d\mid |x|\\ d\ne 1, |x|}}\phi\left(\frac{|x|}{d}\right) \frac{f\left(x^d\right)}{n}\right)\\
    &\le \frac{1}{|x|}\left(1+\phi(|x|)\frac{f(x)}{n}+\M(x)\sum_{d\mid |x|}\phi\left(\frac{|x|}{d}\right)\right)\\
    &\le \frac{1}{|x|}\left(1+(|x|-1-\phi(|x|))\M(x)+\phi(|x|)\frac{f(x)}{n}\right)
    \end{align*}
    by the well known identity $\sum_{d\mid |x|}\phi\left({|x|/d}\right)=|x|$. Finally, the third inequality comes from noting that $f(x)/n\le \M(x)$ by definition.
\end{proof}

\begin{lemma}\label{lem: fixed point bounds} For  $x\ne 1$ one has: 
    \item[(1)] $\M(x)\leq \frac{1}{10}$, and $\U(x)\leq \frac{11}{20}$. If $L\ne A_5$, then $\U(x)\leq \frac{8}{15}$.
%    \item[(2)] $\M(x)\le \frac{1}{15}$ and $\U(x)\leq \frac{8}{15}$ for $|x|=2$, unless $L=A_5$, $t=1$, and $x$ acts on $L$ as an outer involution, in which case $\M(x)\le \frac{1}{10}$ and $\U(x)\leq \frac{11}{20}$.
    \item[(2)] $\M(x)\leq \frac{1}{20}$, and $\U(x)\leq \frac{11}{30}$ for $|x|=3$
    \item[(3)] $\M(x)\leq \frac{1}{10}$, and $\U(x)\le \frac{13}{40}$ for $|x|=4$,
    \item[(4)] $\M(x)\leq \frac{1}{12}$, and $\U(x)\leq \frac{4}{15}$ for $|x|\ge 5$.
\end{lemma}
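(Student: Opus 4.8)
The plan is to split the statement into its two halves: the fixed-point-ratio bounds on $\M(x)$, which carry all the group-theoretic content, and the orbit-ratio bounds on $\U(x)$, which will follow formally from the former via Lemma~\ref{lem: U formula}.

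For the $\M$-bounds, since $\M(x)=\max_{1\le i<|x|}\mathrm{fpr}(x^i)$, it suffices to bound $\mathrm{fpr}(y)$ for every nontrivial $y\in G$ in terms of the order $|y|$, and then read off $\M(x)$ for $x$ of each relevant order. I would organize this according to whether $y$ lies in the socle. Using that each minimal normal subgroup $L^t$ acts regularly, identify $\Omega$ with $L^t$ so that $\soc(G)=L^t\times L^t$ acts by $(a,b)\colon z\mapsto azb^{-1}$. A socle element $(a,b)$ then fixes $z$ exactly when $b=z^{-1}az$, so its fixed points number $|C_{L^t}(a)|$ when $a,b$ are conjugate and $0$ otherwise; hence $\mathrm{fpr}((a,b))=1/|a^{L^t}|$ is the reciprocal of a conjugacy-class size in $L^t$. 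Minimizing the class size over nontrivial $a$ reduces, coordinate by coordinate, to the minimal size $m_0(L)$ of a nontrivial class of $L$ (and, refined by order, to the minimal class size among elements of a fixed order). By the classification of finite simple groups one has $m_0(L)\ge 15$ with equality iff $L=A_5$, the minimal class of order-$3$ elements has size $\ge 20$, and that of elements of order $\ge 5$ has size $\ge 12$, each extremal value attained at $L=A_5$.

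The delicate case is $y\notin\soc(G)$, where $y$ induces a nontrivial automorphism of $L^{2t}$ and may interchange the two factors; here $\mathrm{fpr}(y)$ is no longer a reciprocal class size but a count of solutions to a twisted equation of the form $z=\psi(z^{-1})$ for a suitable $\psi\in\Aut(L^t)$. I would bound these fixed-point counts through centralizer data in $L$ and its almost simple overgroups, as carried out in \cite{Aschbacher_1990} and \cite{Shih_1991}; this is the step that produces the exceptional constant $1/10$ (attained by certain factor-swapping involutions when $L=A_5$) and shows $\mathrm{fpr}(y)\le 1/15$ whenever $L\ne A_5$. Assembling the socle and non-socle estimates by order yields the stated bounds $\M(x)\le 1/10$ (resp.\ $1/15$ if $L\ne A_5$), $\M(x)\le 1/20$ for $|x|=3$, $\M(x)\le 1/10$ for $|x|=4$ (dominated by the involution power $x^2$), and $\M(x)\le 1/12$ for $|x|\ge 5$.

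Finally, the $\U$-bounds are immediate: substituting each $\M$-bound into the second inequality $\U(x)\le \frac{1}{|x|}\bigl(1+\M(x)(|x|-1)\bigr)$ of Lemma~\ref{lem: U formula}, and noting the right-hand side is decreasing in $|x|$ for fixed $\M(x)$, I evaluate at the smallest admissible order in each range: $|x|=2$ gives $\frac12(1+\tfrac{1}{10})=\frac{11}{20}$ (resp.\ $\frac12(1+\tfrac{1}{15})=\frac{8}{15}$), $|x|=3$ gives $\frac13(1+\tfrac{2}{20})=\frac{11}{30}$, $|x|=4$ gives $\frac14(1+\tfrac{3}{10})=\frac{13}{40}$, and $|x|=5$ gives $\frac15(1+\tfrac{4}{12})=\frac{4}{15}$. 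The main obstacle is the non-socle analysis: verifying the sharp constants and, in particular, checking that composite-order elements in part~(4) do not violate $\M\le 1/12$ through a small-order power, which is where the class-size lower bounds from the classification (and the refinements of \cite{Burness_2007,Burness2021}) do the real work.
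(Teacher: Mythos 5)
The paper's own proof of this lemma is essentially a citation: the $\M$-bounds are quoted from Shih (4.6)(2), the $\U$-bounds in (1)--(2) from Shih (4.7), and the $\U$-bounds in (3)--(4) are obtained exactly as you do, by substituting the $\M$-bounds into the last inequality of Lemma~\ref{lem: U formula}. Your arithmetic in that final step is correct (and in fact recovers the (1)--(2) bounds as well), and your computation of socle fixed-point ratios as reciprocals of conjugacy-class sizes in $L^t$, via the regular action of each minimal normal subgroup, is the right mechanism. So the architecture of your argument matches the source.

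The reconstruction of the $\M$-bounds, however --- which you rightly identify as carrying all the content --- has concrete errors. First, the minimal nontrivial class size of $A_5$ is $12$ (each class of $5$-cycles), not $15$; the class of size $15$ consists of the involutions. Second, and more structurally: a type B group has \emph{two} minimal normal subgroups, each of which is therefore normal in $G$, so no element of $G$ interchanges the two copies of $L^t$. There are no ``factor-swapping involutions,'' and the inversion-twisted equation $z=\psi(z^{-1})$ you set up belongs to diagonal-type groups with a \emph{unique} minimal normal subgroup. In type B a non-socle element acts as $z\mapsto u z v^{-1}$ with $u,v\in\Aut(L^t)$ and $uv^{-1}$ inner; its fixed points solve the twisted-conjugacy equation $z^{-1}uz=v$, so $\mathrm{fpr}=1/|u^{L^t}|$ where $u$ may now be an \emph{outer} automorphism. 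The extremal $1/10$ comes from transpositions in $S_5=\Aut(A_5)$, whose $A_5$-class has size $10$ --- not from any factor swap. Finally, the obstacle you flag at the end is real and your argument does not overcome it: since $\mathrm{Fix}(y)\subseteq\mathrm{Fix}(y^j)$, the maximum defining $\M(x)$ is attained at a prime-order power of $x$, so for example an element of order $6$ whose cube is an outer involution of the above kind has $\M(x)=1/10>1/12$. Class-size lower bounds for elements of order $\ge 5$ alone therefore cannot yield part (4) for composite $|x|$; you would need either to restrict to the prime-order powers that actually occur in the relevant elements or to treat such composite-order elements separately, and as written the proposal leaves this step open.
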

Note that  $\frac{4}{15}<\frac{13}{40}<\frac{11}{30}<\frac{8}{15}<\frac{11}{20}$. 

\begin{proof}
    The bounds on $\M(x)$ follow directly from \cite[4.6(2)]{Shih_1991}. The bounds on $\U(x)$ in (1) and (2) follow from \cite[4.7]{Shih_1991} and the bounds on $\U(x)$ in (3) and (4) follow from Lemma~\ref{lem: U formula} along with the associated bounds on $\M(x)$.
\end{proof}

The following lemma is a classical fact, see \cite[Prop.\ 2.4]{GuralnickThompson_1990} or \cite[Prop.\ 9.5]{NZ}. 
\begin{lemma}\label{lem:closure-1}
    Let $f:X\to \mathbb P^1$ be a degree $d$ covering with monodromy group $G$ whose corresponding system is of one of the types $(d,d),(2,2,d),(2,2,2,2),(2,3,3),(2,3,4)$, 
    $(2,3,5)$, $(2,3,6)$, $(2,4,4)$, or $(3,3,3)$. Then either $G$ is solvable, or $G\cong A_5$ and the type is $(2,3,5)$. 
\end{lemma}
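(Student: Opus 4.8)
The plan is to treat each listed type as a set of defining relations and study the resulting universal group. Writing the branch cycles as $x_1,\dots,x_r$ with $x_i^{e_i}=1$ and $x_1\cdots x_r=1$, the monodromy group $G=\langle x_1,\dots,x_r\rangle$ is a quotient of the finitely presented group
$$\Gamma(e_1,\dots,e_r)=\langle y_1,\dots,y_r \mid y_i^{e_i}=1,\ y_1\cdots y_r=1\rangle.$$
So it suffices to determine the structure of $\Gamma(e_1,\dots,e_r)$ for each type and read off which of its finite quotients can fail to be solvable. I would organize the types according to the value of $\sigma:=\sum_i(1-1/e_i)$, which separates them into the two-point case, the \emph{spherical} triangle types with $\sigma<2$, and the \emph{Euclidean} types with $\sigma=2$.

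For the type $(d,d)$ the relation $x_1x_2=1$ forces $x_2=x_1^{-1}$, so $\Gamma\cong C_d$ is cyclic and $G$ is solvable. For the spherical triangle types the group $\Gamma$ is finite, and I would identify it via the classical classification of finite von Dyck (rotation) subgroups of $\mathrm{SO}(3)$: one has $\Gamma(2,2,d)\cong D_d$ dihedral, $\Gamma(2,3,3)\cong A_4$, $\Gamma(2,3,4)\cong S_4$, and $\Gamma(2,3,5)\cong A_5$. Among these finite groups every composition factor is cyclic, with the single exception of $\Gamma(2,3,5)\cong A_5$, which is simple nonabelian.

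For the remaining types $(2,3,6)$, $(2,4,4)$, $(3,3,3)$, and $(2,2,2,2)$ we have $\sigma=2$, and $\Gamma$ is the orientation-preserving fundamental group of a Euclidean (flat) orbifold, i.e.\ a two-dimensional crystallographic group. By Bieberbach's theorem such a $\Gamma$ contains a normal translation lattice $T\cong\mathbb Z^2$ of finite index with cyclic quotient $\Gamma/T\cong C_k$, $k\in\{6,4,3,2\}$; in particular $[\Gamma,\Gamma]\subseteq T$ is free abelian, so $\Gamma$ is metabelian. Hence $\Gamma$, and therefore every quotient $G$, is solvable. (Alternatively, for each of the four cases one can exhibit $T$ and the $C_k$-action explicitly rather than invoke Bieberbach.)

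Putting the cases together: if $G$ is nonsolvable then $\Gamma$ is nonsolvable, which by the above occurs only for the type $(2,3,5)$ with $\Gamma\cong A_5$; since $A_5$ is simple, its only nonsolvable quotient is $A_5$ itself, forcing $G\cong A_5$, as claimed. I expect the only real content to be bookkeeping of standard structural facts: the identification of $\Gamma$ with one of the finite polyhedral/dihedral groups in the spherical cases, and with a virtually-$\mathbb Z^2$ crystallographic group in the Euclidean cases. Once these identifications are recorded, the solvability dichotomy and the uniqueness of the $A_5$ exception are immediate, so the main obstacle is simply assembling (or citing, as in \cite{GuralnickThompson_1990, NZ}) this classification cleanly rather than any new argument.
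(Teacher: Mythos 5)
Your proof is correct. The paper does not actually prove this lemma itself --- it records it as a classical fact and cites \cite[Prop.~2.4]{GuralnickThompson_1990} and \cite[Prop.~9.5]{NZ} --- and your argument is essentially the standard one underlying those references: the listed types are exactly those with $\sum_i(1-1/e_i)\le 2$, so $G$ is a quotient either of a spherical von Dyck group (cyclic, dihedral, $A_4$, $S_4$, or the simple group $A_5$, the last occurring only for type $(2,3,5)$) or of a Euclidean one (a metabelian wallpaper group), and the solvable/$A_5$ dichotomy follows. The cited sources package the same computation via the Galois closure having genus at most $1$ (finite subgroups of ${\rm PGL}_2(\mathbb C)$ in genus $0$, metabelian automorphism groups of elliptic curves in genus $1$), but this is only a difference of presentation, not of substance.
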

In fact, in the setup of the lemma the genus of the Galois closure of $f$ is at most $1$ and the maps $f$ and their monodromy groups are completely classified \cite[Prop.\ 9.5]{NZ}. See also  \cite[Lemmas 2.5.8]{Khud} for a criterion to ensure genus $>1$ in terms of $\M$.
\section{Proof of Main Theorem}
% \begin{theorem}
% Let $\psi : X \rightarrow \mathbb{P}^1$ be an indecomposable covering of genus $g_X \le 1$ with monodromy group $G = \Mon(f)$ such that $\soc(G)$ is not one of the groups ${\rm PSL}_2(7)^2$, ${\rm PSL}_2(8)^2$, $A_N^2$ for $N \le 8$. Then G has a unique minimal normal subgroup.
% \end{theorem}
%This section is devoted to the proof of our main theorem. 
%The same proof strategy used in Shih’s paper \cite{Shih_1991} is applied with a few modifications. Our approach is to rule out all but finitely many cases which can then be checked by computer.

%Following \cite[§1]{Shih_1991}, 
Throughout the proof we assume $(G,S,\Omega)$ is a system of genus $g$, where $S$ is a tuple $x_1, \ldots, x_r$ of product $1$  generating $G$, and $G$ is a primitive group of type B acting on a set $\Omega$ of size $n$. For short, we call such a configuration a \textit{type B genus-$g$ system}. Let
$L$ be the nonabelian simple group such that $\soc(G) \cong L^{2t}$ for some $t \ge 1$.
        % \item 
Denoting by $L_1, ..., L_t$ the $t$ copies of $L$ in a minimal normal subgroup of $G$, we let $\rho:G \rightarrow S_t$ be defined by $L^x_s = L_{\rho(x)(s)}$, so that $\rho(x)$ permutes the $t$ copies of $L$.

We assume that $g\leq 1+cn$ for a constant $c\leq 1/5000$. This choice will be justified in the proof of Proposition~\ref{prop: socle} in treating systems of types $(2,3,7)$. Then $\U(S) \ge \# S - 2(1+c)$ by the Riemann--Hurwitz formula. Under Shih's assumption that $g= 0$, the strict inequality $\U(S) > \#S - 2$ holds. 
We adjust Shih's argument to work even when assuming merely that the weaker inequality 
%in fact holds when %$\U(S) > \#S - 2(1+c)$ is 
%replaced by 
$\U(S) \geq \# S - 2(1+c)$ holds. We follow Shih's paper \cite{Shih_1991} closely, indicating the required modifications and for what constant $c$ the proof works at each step. 

As in \cite[(4.8)]{Shih_1991} and \cite[(2.3)-(2.4)]{Salih2023}, one first shows that, outside one exceptional type treated in Proposition \ref{prop:Spencer} (with Magma),   $\# S \le 3$. We modify this argument to additionally calculate the explicit bound on the genus under which the conclusion holds.
\begin{proposition}\label{prop:br}
 %There exists a constant $c>0$ such that 
 For every degree-$n$ type B system $(G,S,\Omega)$ of genus {$g< n/80+1$}, one has $\#S\leq 4$. Moreover, $\#S\leq 3$ unless $L=A_5$ and $S$ is of type $(2,2,2,3)$. 
 
 %   If $g_X\le 1$ then $r\le 4$. In fact, $r\le 3$ unless $L=A_5$ and $S$ has type $(2,2,2,3)$.
\end{proposition}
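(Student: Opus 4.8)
The plan is to convert the genus bound into a lower bound on $\U(S)$ and then confront it with the uniform upper bounds on orbit ratios from Lemma~\ref{lem: fixed point bounds}, refining the analysis only for the single borderline multiset that survives.

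First I would record the consequence of the genus hypothesis. Since $\U(S) = \#S - 2 + 2(1-g)/n$, the bound $g < n/80+1$ yields $2(1-g)/n > -1/40$, hence the strict inequality $\U(S) > \#S - 2 - \tfrac{1}{40}$. This is the only point at which the constant $1/80$ enters, and I would flag that it is calibrated precisely to make a later inequality strict. Feeding in $\U(x_i) \le 11/20$ for each nontrivial $x_i$ (Lemma~\ref{lem: fixed point bounds}(1)) gives $\U(S) \le (11/20)\#S$, so that $(9/20)\#S < 81/40$ and therefore $\#S < 4.5$; this already forces $\#S \le 4$.

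Next I would rule out $\#S = 4$ except in the stated case, so assume $\#S=4$ and hence $\U(S) > 79/40$. The type $(2,2,2,2)$ is eliminated immediately: Lemma~\ref{lem:closure-1} shows its monodromy group is solvable, whereas a type-B group contains the nonabelian simple group $L$ and is not. So at least one $x_i$ has order $\ge 3$, and then Lemma~\ref{lem: fixed point bounds}(2)--(4) bounds its orbit ratio by $11/30$. When $L \ne A_5$, the sharper bound $\U(x_i)\le 8/15$ applies to the remaining three elements, giving $\U(S) \le 3\cdot\tfrac{8}{15} + \tfrac{11}{30} = \tfrac{59}{30} < \tfrac{79}{40}$, contradicting $\U(S) > 79/40$. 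When $L = A_5$, I would count the involutions $\alpha$ in $S$; the estimate $\U(S) \le \tfrac{11}{20}\alpha + \tfrac{11}{30}(4-\alpha)$ together with $\U(S) > 79/40$ forces $\alpha \ge 3$, and since $\alpha = 4$ is the excluded type $(2,2,2,2)$, we get $\alpha = 3$ and $S$ of type $(2,2,2,m)$ with $m \ge 3$.

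The hard part --- really the only subtle point --- is pinning $m = 3$. Here the bounds land exactly on the boundary value: for $m \ge 4$ one computes $\U(S) \le 3\cdot\tfrac{11}{20} + \tfrac{13}{40} = \tfrac{79}{40}$, which is consistent with a non-strict estimate but contradicts the \emph{strict} inequality $\U(S) > 79/40$ secured in the first step. Thus only $m = 3$ survives, and the proposition follows. This is exactly why the argument needs genus strictly below $n/80+1$ rather than merely at most $1$, and why the surviving exceptional configuration is $(2,2,2,3)$ and no larger $m$.
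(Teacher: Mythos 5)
Your proof is correct and takes essentially the same route as the paper's: Riemann--Hurwitz turns the genus hypothesis into the strict bound $\U(S) > \#S - 2 - \tfrac{1}{40}$, the uniform estimate $\U(x_i)\le 11/20$ forces $\#S\le 4$, Lemma~\ref{lem:closure-1} eliminates $(2,2,2,2)$, and the refined bounds of Lemma~\ref{lem: fixed point bounds} isolate $(2,2,2,3)$ for $L=A_5$. The only cosmetic difference is that you organize the $A_5$ case by counting involutions, whereas the paper splits on how many branch cycles have order $>2$; the resulting inequalities are identical.
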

\begin{proof}
%Pick $c<1/80$.    
By Lemma~\ref{lem: fixed point bounds}.(1), one has $\U(x_i) \le {11}/{20}$ % < \frac{3}{5}$ 
    so that $\U(S) < \left({11}/{20}\right)\cdot\#S$. Thus, as $g<n/80+1$, the Riemann--Hurwitz formula yields: \[\#S-2\left(1+\frac{1}{80}\right)< \U(S) < \left(\frac{11}{20}\right)\cdot\#S,\] and hence $\#S \le 4$. % for $c<1/8$. %for sufficiently small $c$.

If $S$ has type $(2,2,2,2)$, then $G$ is solvable 
by Lemma \ref{lem:closure-1}, %\cite[3.5]{Aschbacher_1990}. Therefore every minimal normal subgroup of $G$ is elementary abelian, a 
contradicting that $L$ is a nonabelian simple group. % minimal normal subgroup. 
So we may assume $S$ contains an element of order at least $3$. If $L\ne A_5$, then Lemma~\ref{lem: fixed point bounds}.(2\,-\,5) implies  $\U(S)\le 3\cdot {8}/{15}+{11}/{30}=59/30<2-2c$ for $c<1/60$, and hence $\#S\le 3$.

Finally, assume $L=A_5$. If $S$ does not have type $(2,2,2,3)$, then by Lemma~\ref{lem: fixed point bounds}.(2-3), one has $\U(S)\le 2\cdot{11}/{20}+2\cdot{11}/{30}<2-2c$ for $c<1/12$ in case two branch points are of type $>2$, or 
$\U(S)\leq 3\cdot {11}/{20}+{13}/{40}=79/40<2-2c$ for $c<1/80$ in case one branch point is of type $>3$, % sufficiently small $c$, 
a contradiction.
\end{proof}

Now consider types $(k,\ell,m)$ of length $3$, 
%the systems $x_1,x_2,x_3$ of length $3$. Let $k, \ell ,m$ be the orders of $x_1, x_2, x_3$, where 
and assume without loss of generality $k \le \ell \le m$.
\begin{proposition}\label{prop: r=3 cases}
%    There exists a constant $c>0$ such that 
    Every type B  system $(G,S,\Omega)$ of genus {$g< 1+n/296$} and $\#S=3$   has  one of the following types:
    \iffalse OLD VERSION
\begin{itemize}
    \item $(2, 3,m)$ for $m\geq 7$, with additionally $m\le 18$ if $L\ne A_5$.
    \item $(2, 4,m)$, $5 \le m \le 35$
    \item $(2, 5,m)$, $5 \le m \le 10$
    \item $(2, 6,m)$, $m = 6, 7, 8$
    \item $(3, 3,m)$, $m = 4, 5$
    \item $(3, 4, 4)$.
\end{itemize}
\fi 
\begin{itemize}
    \item $(2, 3,m)$ for $m\geq 7$, with additionally $m\le 29$ in case $L\ne A_5$.
    \item $(2, 4,m)$, $5 \le m \le 37$. 
    \item $(2, 5,m)$, $5 \le m \le 13$.
    \item $(2, 6,m)$, $m = 6, 7, 8, 9$.
    \item $(3, 3,m)$, $4\le m \le 9$.
    \item $(3, 4, m)$, $m=4, 5$.
\end{itemize}
\end{proposition}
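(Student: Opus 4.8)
The plan is to convert the Riemann--Hurwitz constraint into an upper bound on the orders $k\le\ell\le m$ by feeding the fixed-point-ratio estimates of Lemmas~\ref{lem: U formula} and~\ref{lem: fixed point bounds} into $\U(S)$, and to use Lemma~\ref{lem:closure-1} to pin down the lower endpoints. Since $\#S=3$ and $g<1+n/296$ (so $c<1/296$), the identity $\U(S)=\#S-2+2(1-g)/n$ gives the working threshold $\U(S)>1-1/148$. The two estimates I would carry are the monotone bound $\U(x)\le\frac1{|x|}(1+(|x|-1)\M(x))\le\frac{|x|+9}{10|x|}$ coming from Lemma~\ref{lem: U formula} with $\M(x)\le 1/10$, refined to $\U(x)\le\frac{|x|+11}{12|x|}$ for $|x|\ge 5$ by Lemma~\ref{lem: fixed point bounds}.(4).

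First I would pin down the pair $(k,\ell)$. Three elements of order $\ge 5$ give $\U(S)\le 3\cdot\frac4{15}<1-1/148$, and if $k\ge 4$ then all three orders are $\ge 4$, giving $\U(S)\le 3\cdot\frac{13}{40}<1-1/148$; hence $k\in\{2,3\}$. Applying the order-$\ge 5$ bound to $x_2,x_3$ (maximised at $m=\ell$) then forces $\ell\le 6$ when $k=2$ and $\ell\le 4$ when $k=3$, so that enumerating the cases leaves exactly the six pairs $(2,3),(2,4),(2,5),(2,6),(3,3),(3,4)$. Next, Lemma~\ref{lem:closure-1} excludes the types whose Galois closure is spherical or Euclidean -- namely $(2,2,m)$, $(2,3,m)$ with $m\le 6$, $(2,4,4)$ and $(3,3,3)$ -- because $G$ is neither solvable nor $A_5$. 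This supplies the stated lower bounds on $m$.

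The upper bounds on $m$ come from inserting the threshold into the bound on $\U(x_3)$. Solving $\frac{m+9}{10m}>1-1/148-\U(x_1)-\U(x_2)$ already produces finite bounds (essentially the stated ones) for the pairs $(2,5),(2,6),(3,3),(3,4)$, whereas for $(2,3,m)$ with $L=A_5$ the right-hand side falls below $\lim_{m\to\infty}\frac{m+9}{10m}=1/10$, correctly leaving $m$ unbounded. The delicate cases are $(2,3,m)$ with $L\ne A_5$, where even the sharper $\M(x)\le 1/12$ bound only yields $m\le 92$, and $(2,4,m)$, where the crude bound only gives $m\le 49$ rather than $m\le 37$. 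To reach the stated endpoints one must use the full form of Lemma~\ref{lem: U formula} carrying the $\phi(m)$-term, control $f(x_3)/n$ through the behaviour of $\rho(x_3)$ (using $f(x)/n\le 1/60$ when $\rho(x)\ne 1$, and a structural estimate when $\rho(x)=1$), and invoke Shapiro's lower bound $\phi(m)>m^{\log2/\log3}$ to eliminate all large $m$. This last step, which follows \cite{Shih_1991} and is where the constant $1/296$ is calibrated, is the main obstacle; the remaining pairs reduce to routine finite checks.
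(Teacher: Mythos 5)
Your overall strategy --- Riemann--Hurwitz giving $\U(S)>1-1/148$, the fixed-point-ratio bounds of Lemmas~\ref{lem: U formula} and~\ref{lem: fixed point bounds} to bound each $\U(x_i)$ from above, and the hyperbolicity condition (via Lemma~\ref{lem:closure-1}) for the lower endpoints --- is the same as the paper's, which packages the numerics as an adaptation (4.10') of Shih's (4.10). Your determination of the pairs $(k,\ell)$ and your treatment of $(2,5,m)$, $(2,6,m)$, $(3,3,m)$, $(3,4,m)$ are correct; in fact $(2,4,m)$ is not delicate at all, since your own refined bound $\U(x_3)\le (m+11)/(12m)$ (valid as $m\ge 5$) already gives $m\le 26\le 37$, so no $\phi(m)$-argument is needed there.

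The genuine gap is the case $(2,3,m)$ with $L\ne A_5$, where you concede your bounds only give $m\le 92$ and propose to reach $m\le 29$ by combining the $\phi(m)$-term of Lemma~\ref{lem: U formula} with $\mathrm{fpr}(x_3)\le 1/60$ ``when $\rho(x_3)\ne 1$'' and Shapiro's bound $\phi(m)>m^{\log 2/\log 3}$. That route is only available when $t\ge 2$: for $t=1$ the map $\rho$ is trivial, the only available control is $\mathrm{fpr}(x_3)\le\M(x_3)\le 1/12$, and then the $\phi(m)$-refinement of Lemma~\ref{lem: U formula} collapses back to $\U(x_3)\le\frac{1}{m}\left(1+\frac{m-1}{12}\right)$, i.e.\ exactly your crude estimate, with no improvement. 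The unspecified ``structural estimate when $\rho(x)=1$'' is precisely the missing ingredient. The paper closes this case differently: it feeds sharper, $L$-dependent bounds on $\M(x_1)$ and $\M(x_2)$ from \cite[(4.6)(2)]{Shih_1991} into the analogue of Shih's (4.10).(4), obtaining $m\le 29$ for $c<1/116$; the Shapiro/$\phi(m)$ device is reserved for Proposition~\ref{prop:t=1}, where $\rho(x_i)\ne 1$ has already been arranged. Without importing those sharper bounds on $\M$ (or some other control of $\mathrm{fpr}(x_3)$ valid at $t=1$), the range $30\le m\le 92$ for type $(2,3,m)$ with $L\ne A_5$ remains open in your argument.
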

\begin{proof}
%Pick $c<1/200$,   
Shih \cite[(4.11)]{Shih_1991} restricts the possible types $(k,\ell,m)$ of $3$-tuples using his estimates of $\U(S)$ from \cite[(4.10)]{Shih_1991}. For $g<1+n/296$, \cite[(4.10)]{Shih_1991} takes the following form (via the same argument): 
    
    (4.10') Assume $\M(x)< \lambda$ for all $x\in G$ of prime order, and $g<1+cn$ for $c>0$ sufficiently small to make the following denominators positive. Then:
    \begin{enumerate}
        \item $1>\frac{1}{k}+\frac{1}{\ell}+\frac{1}{m} > \frac{1-2c-3\lambda}{1-\lambda}.$
        \item $k\leq \left\lfloor \frac{3(1-\lambda)}{1-2c-3\lambda}\right\rfloor$.
        \item If $\M(x_1)\leq a\leq \lambda$, then $\ell\leq \left\lfloor\frac{2(1-\lambda)}{(1-1/k)(1-a)-2c-2\lambda}\right\rfloor$.
        \item If $\M(x_1)\leq a\leq \lambda$ and $\M(x_2)\leq b\leq \lambda$, then
        $$ m\leq \left[ \frac{1-\lambda}{(1-1/k)(1-a)+(1-1/\ell)(1-b)-2c-(1+\lambda)}\right].$$
    \end{enumerate}
    Shih's proof of (4.11) does not give the details of the computation and when following his method, we get larger bounds. Hence, we detail  the argument here: 
    
    First, by (4.10').(2) above with $\lambda =1/10$ and $c<1/80$, one has $k\in\{2,3\}$. For $k=2$, (4.10').(3) with $c<1/80$ yields $\ell\leq 7$. Moreover, for $\ell=7$, one may apply (4.10').(4) with $a=\lambda=1/10$, and $b=1/24$ by \cite[(4.6).(2)]{Shih_1991} to get that $m\leq 6$ for $c<1$, contradicting $\ell\leq m$. Thus $\ell\leq 6$ if $k=2$. For $k=3$, (4.10').(3) gives $\ell\leq 4$ for $c<1/50$. 
    Similarly, for $(3,3,m)$, we get $m\leq 9$ for $c<1/200$.
    For $(3,4,m)$, we get $m\leq 5$ for $c<11/160$.
    For $(2,4,m)$, we get $m\leq 37$ for $c<1/296$.
    For $(2,5,m)$, we get $m\leq 9$ for $c<1/200$.
    For $(2,6,m)$, we get $m\leq 9$ for $c<1/200$.
    If $L\neq A_5$, for $(2,3,m)$ we get $m\leq 29$ when $c<1/116$.%\DN{Why can $b=1/20$ be picked here, can do $1/40$? } 

  %  His proof applies after replacing the inequality  $\U(S) > 1$ (resp.\ $g_X<1$) by $\U(S) \ge 1-2c$ (resp. $g_X \le 1+cn$).
\end{proof}

Recall that if $G$ is a primitive group of type B, then soc($G$) $\cong L^{2t}$ for some nonabelian simple group $L$, and $t \geq 1$.
For a genus-$0$ system,  \cite[(4.17)-(4.21)]{Shih_1991} asserts that  $t = 1$, so that $\soc(G)\cong L^2$. The treatment relies on \cite[(4.16)]{Shih_1991} which applies the inequality $\U(S) \ge 1$ in order to deduce that the type $(k, \ell,m)$ is $(2, 3, 8)$, or $(2, 4, 5)$ or $(2, 4, 6)$. However, we note that for $(k,\ell,m) = (2, 3, 7)$, $(2, 3, 10)$ or $(2, 3, 12)$ the estimates on $\U(S)$ in the proof of \cite[(4.16)]{Shih_1991} do not contradict the inequality $\U(S) \ge 1$, leaving these cases open. In these cases, we refine the estimates as a part of establishing the following more general proposition.
Recall that $f(x)$ is the number of fixed points of $x \in G$ on $\Omega$.

\begin{proposition}\label{prop:t=1}
%[{Refinement of \cite[(4.16)]{Shih_1991}}]\label{modified 4.16}
%There exists a constant $c>0$ such that whenever we have a 
For every degree-$n$ type B system $(G,S,\Omega)$ of genus $g \le 1+n/460$ with $\#S=3$, we have $\soc(G)\cong L^2$ for some finite nonabelian simple group $L$.
\end{proposition}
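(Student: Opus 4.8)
Since $\soc(G)\cong L^{2t}$, proving $\soc(G)\cong L^2$ is equivalent to showing $t=1$; I would argue by contradiction and assume $t\ge 2$. The main device is the homomorphism $\rho:G\to S_t$ permuting the $t$ simple factors of a minimal normal subgroup. Because $G$ permutes these factors transitively, the image $\rho(G)=\langle \rho(x_1),\rho(x_2),\rho(x_3)\rangle$ is transitive on $\{1,\dots,t\}$, and the relation $x_1x_2x_3=1$ descends to $\rho(x_1)\rho(x_2)\rho(x_3)=1$. Writing $T=\{i:\rho(x_i)=1\}$, the product relation rules out $|T|=2$ (it would force the third image trivial too) and transitivity on $t\ge 2$ points rules out $|T|=3$; hence $|T|\in\{0,1\}$, and I would treat these two cases separately.

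First I would dispose of the case $|T|=1$, say $\rho(x_i)=1$. Then the other two images are mutually inverse and generate the transitive cyclic group $\rho(G)$, so each is a single $t$-cycle; in particular $t$ divides both of the corresponding element orders. Running this against the finitely many types of Proposition~\ref{prop: r=3 cases}, for each type and each choice of $i$ the integer $t$ must divide two of the three orders $k,\ell,m$, which already forces $t=1$ whenever those two orders are coprime (for instance, all three choices for type $(2,3,7)$). In the finitely many residual $(\text{type},i,t)$ configurations I would invoke \cite[(4.14)]{Shih_1991}, which identifies the order of the within-block part of each $t$-cycle element, to contradict the prescribed orders; this eliminates $|T|=1$ entirely.

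The case $|T|=0$ is where the genuinely new estimates enter. Here every $\rho(x_i)\neq 1$, so by \cite[(4.6)]{Shih_1991} one has ${\rm fpr}(x_i)\le 1/60$, and more precisely any power $x_i^d$ with $\rho(x_i^d)\neq 1$ satisfies ${\rm fpr}(x_i^d)\le |L|^{-(t-\orb(\rho(x_i^d)))}\le |L|^{-1}\le 1/60$, the exponent growing with the support of $\rho(x_i^d)$ on the $t$ factors; powers lying inside a single block (i.e.\ with $\rho(x_i^d)=1$) are instead bounded using Lemma~\ref{lem: fixed point bounds}. Substituting these into Lemma~\ref{lem: U formula} and summing over the types of Proposition~\ref{prop: r=3 cases}, I would obtain $\U(S)<1-1/230$, contradicting the Riemann--Hurwitz bound $\U(S)\ge \#S-2(1+c)=1-2c$ with $c=1/460$, for all but finitely many types. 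The surviving cases $(2,3,8),(2,4,5),(2,4,6)$ are handled exactly as in \cite[(4.17)--(4.21)]{Shih_1991}, while the cases $(2,3,7),(2,3,10),(2,3,12)$---precisely those missed in \cite[(4.16)]{Shih_1991}---require the refinement above: for $(2,3,7)$ all element orders are prime, so no within-block powers occur, and $\rho(x_2),\rho(x_3)$ of prime orders $3,7$ move at least $3$ and $7$ factors respectively, giving ${\rm fpr}(x_2)\le|L|^{-2}$, ${\rm fpr}(x_3)\le|L|^{-6}$, whence $\U(S)<1-1/230$; for $(2,3,10)$ and $(2,3,12)$ I would split the powers of the last generator into within-block powers (bounded by Lemma~\ref{lem: fixed point bounds}) and component-moving powers (bounded by $1/60$) to control $\M(x_3)$ and again force $\U(S)<1-1/230$.

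The main obstacle is this last refinement. Shih's uniform bound ${\rm fpr}\le 1/60$ leaves $\U(S)$ just above $1$ for the three stubborn types, so the proof must track the exact number of factors moved by each $\rho(x_i^d)$ and separate out the within-block powers; the constant $1/460$ is chosen precisely so that the resulting strict inequality $\U(S)<1-2c$ holds in these borderline $(2,3,7),(2,3,10),(2,3,12)$ computations. A secondary subtlety is that the $|T|=1$ elimination genuinely needs the within-block order information of \cite[(4.14)]{Shih_1991}, not merely the $t$-cycle divisibility $t\mid|x_j|$.
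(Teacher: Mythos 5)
Your proposal follows the paper's proof in all essentials: the same dichotomy on whether some $\rho(x_i)=1$; the bound ${\rm fpr}(x_i)\le 1/60$ from \cite[(4.6)(3)]{Shih_1991} when every $\rho(x_i)\ne 1$, fed into Lemma~\ref{lem: U formula} to eliminate all but finitely many of the types from Proposition~\ref{prop: r=3 cases}; the deferral of $(2,3,8)$, $(2,4,5)$, $(2,4,6)$ to Shih's (4.18)--(4.20); and, as the genuinely new ingredient, the refined bounds ${\rm fpr}(x_i^d)\le |L|^{-(t-\orb(\rho(x_i^d)))}$ for the borderline types $(2,3,7)$, $(2,3,10)$, $(2,3,12)$, splitting powers of $x_3$ according to whether $\rho(x_3^d)$ is trivial (Lemma~\ref{lem: fixed point bounds}) or moves several factors. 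This matches the paper's computations exactly --- e.g.\ your $\rho(x_3)$ contains a $7$-cycle giving ${\rm fpr}(x_3)\le 1/60^6$ is precisely the paper's $(2,3,7)$ step, and your extra refinement ${\rm fpr}(x_2)\le |L|^{-2}$ only adds slack to an inequality that in the paper's version is razor-thin (margin about $1.7\times 10^{-5}$ at $c=1/460$).

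The one step that would fail as written is your claim that the case $|T|=1$ is ``eliminated entirely'' by order arithmetic: the $t$-cycle divisibility $t\mid |x_j|$, $t\mid |x_k|$ together with the within-block order information of \cite[(4.14)]{Shih_1991}. This arithmetic does not close all residual configurations --- for instance, for type $(2,4,8)$ with $\rho(x_1)=1$ the constraints of (4.14) are consistent with $t=4$, and similarly $(2,5,10)$ and $(2,6,6)$ survive. Shih's own treatment in \cite[(4.17)]{Shih_1991}, which the paper invokes at this point (showing the argument works whenever $g\le 1+cn$ with $c<1/360$), disposes of these cases with fixed-point-ratio estimates of the same flavor as your $|T|=0$ analysis, not with divisibility alone. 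So this branch needs the analytic estimates too; with that repair your outline coincides with the paper's proof.
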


\begin{proof}
\iffalse For the first part, we work through the cases in Proposition~\ref{prop: r=3 cases}. By Lemma~\ref{lem: U formula} and the fact that $\M(x_i)\le \frac{1}{10}$ and $\frac{f(x_i)}{n}\le \frac{1}{60}$ for all $i$, we get \[1-\U(x_1)-\U(x_2)=\U(x_3)\le \frac{1}{10}+\frac{9}{10m}-\frac{\phi(m)}{12m}.\]

    First consider $S$ of type $(2,4,m)$. Since $\U(x_1)+\U(x_2)\le \frac{19}{24}$, we get $\frac{13}{120}\le \frac{9}{10m}-\frac{\phi(m)}{12m}$. Using the bound $\phi(m)>m^{\frac{\log 2}{\log 3}}$ as before, this cannot hold for $m>8$. In fact, one may quickly check that it does not hold for $m\ge 7$.

        For $(k,\ell)=(2,5),(2,6),(3,3),(3,4)$ we use the same argument with the bounds $\U(x_1)+\U(x_2)\le \frac{433}{600}, \frac{263}{360}, \frac{31}{45}, \frac{113}{180}$.
    \fi
Since $G$ is of type B, we write $\soc(G)=L^{2t}$,  claiming that $t=1$. 
%We  show that $t=1$ which implies $\soc(G)\cong L^2$. 
By Proposition~\ref{prop: r=3 cases}, we may assume $S$ is one of the types:
\begin{equation}
\label{equ:list}
\begin{split}
  &   (2, 3,m)\text{ for }7\le m; 
 (2, 4,m), 5 \le m \le 37;
 (2, 5,m), 5 \le m \le 13;
    (2, 6,m), 6\leq m \leq  9; \\
& (3, 3,m), 4\le m \le 9;
(3, 4, m), m=4, 5.
\end{split}
\end{equation}
%1/360>2(g-1)/n=1-u(S)
Recall that the map $\rho:G \rightarrow S_t$ is defined by $L^x_s = L_{\rho(x)(s)}$.
First note that upon replacing the inequality $\U(S)\leq 1$ by $\U(S)\leq 359/360$ or equivalently $g<1+n/720$,  \cite[(4.18-20)]{Shih_1991} still apply and show\footnote{\cite[(4.20)]{Shih_1991} has a typo, namely, the number $307/1800$ should be replaced by $317/1800$, but this does not change the outcome.} that for $S$ of type $(2,3,8)$, $(2,4,5)$, and $(2,4,6)$, one has $t=1$.
%This proof works for \SPENCER{$c<\frac{1}{720}$}.
%4.18 works for $c<7/1200$, 4:19 works for $c< 1/720$, 4:20 works for $c<1/720$.
%\EILIDH{Typo in Shih (4.20): it should be 317/1800, not 307/1800. Doesn't change the result though.}
%\textbf{Case 1:} 
Moreover, if $\rho(x_i)=1$ for some $x_i \in S$, 
the argument of \cite[(4.17)]{Shih_1991} shows that the inequality $g\le 1+cn$ for $c<1/360$ implies $t=1$. %\SPENCER{Should this be $c=\frac{1}{460}$, as in statement of Proposition?}
%which works for \SPENCER{$c< \frac{1}{360}$} to show that $t=1$.
%{Shih 4.17 works for $c\le min{404999/16200000, 1079999/21600000, 35656847999999/377913600000000,79/4800,77/1500,1/40,1249/9000,47/16800,13/1200,43/10800,1/90,1/360,13/1800,35656847999999/377913600000000,2699/432000,13/1200}$}  

%\textbf{Case 2:} 
Henceforth assume that $S$ is as in 
\eqref{equ:list} but is not of type $(2,3,8)$, $(2,4,5)$, or $(2,4,6)$, and that  $\rho(x_i)\ne 1$ for all $x_i\in S$.
The last assumption implies that  $\text{fpr}(x_i)\le {1}/{60}$ for all $x_i \in S$ by \cite[(4.6)(3)]{Shih_1991}.

% Code to check constants in Shih 4.17 smallest c=1/360.
\begin{comment}
%k:=61/120;
%l:=7/25;
%m:=59/500;

%l:=1/4;
%m:=57/300;

%l:=103/400;
%m:=1/10+(1/7)*9/10;

%l:=57/300;
%m:=1/10+(1/5)*9/10;

%k:=1801/5400;
%l:=k;
%m:=1/10+(1/4)*9/10;

%//i=1 (255)
%k:=11/20;
%l:=1/5+(4/5)*1/60^4;
%m:=1/5+(4/5)*1/60^4;
%// (2510)
%m:=3/20+(2/5)*1/60^4;

%// l=4,6
for m in [9..15] do
y:=1/15;
1/2*(9/10*(1/m)-(1/12)*(EulerPhi(m)/m)-y);
end for;
"Now (26m)";
for m in [5..12] do
y:=23/180;
1/2*(9/10*(1/m)-(1/12)*(EulerPhi(m)/m)-y);
end for;

"Now (34m)";
for m in [3..12] do
y:=1/4;
1/2*(9/10*(1/m)-(1/12)*(EulerPhi(m)/m)-y);
end for;

//(248)
k:=11/20;;
l:=11/40+(1/2)*1/(60)^3;
m:=13/80+(1/2)*1/(60)^3;

k:=11/20;;
l:=103/400;
m:=41/240;
1/2*(1-k-l-m);
\end{comment}
% Check for (4.17)-(4.20)
\begin{comment}
    // (238)
k:=61/120;;
l:=1801/5400;
m:=11/80+31/(2*60^3);
m:=29/200;
l:=67/200;
m:=29/200;

// (245)
l:=17/60;
m:=1/5*(1+4/60^4);
m:=37/180;

// 246
k:=11/20;
l:=103/400;
m:=307/1800;

k:=61/120;
l:=17/60;
m:=37/180;
1/2*(1-k-l-m);

\end{comment}

%The details are listed in the table. 
%Most of the calculations are done in \cite[(4.16)]{Shih_1991}, but we correct some errors. 
By Riemann--Hurwitz $\U(S)=1+2(1-g)/n\geq 1-2/460=229/230$. The combination of this  with  Lemma~\ref{lem: U formula} and the bounds $\M(x_i)\le {1}/{10}$ \cite[(4.7.1)]{Shih_1991} and $\text{fpr}(x_i)\le {1}/{60}$, for all $i$, give: \begin{equation}\label{eqn: phi bound}\frac{229}{230}-\U(x_1)-\U(x_2)\le\U(x_3)\le \frac{1}{10}+\frac{9}{10m}-\frac{\phi(m)}{12m}.\end{equation}

Again using Lemma~\ref{lem: U formula} and the bounds $\M(x_i)\le {1}/{10}$ and $\text{fpr}(x_i)\le {1}/{60}$, for each of the possibilities for $(k,\ell)$ in \eqref{equ:list} we get the following upper bounds on $\U(x_1)+\U(x_2)$.

\vspace{2pt}
\begin{center} 

 \begin{tabular}{|c|c|}
        \hline
    $(k,\ell)$ & \text{upper bound on $\U(x_1)+\U(x_2)$} 
    \\
    \hline\\[-1em]
    $(2,3)$ & $\frac{307}{360}$ \\[5pt]
    $(2,4)$ & $\frac{19}{24}$ \\[5pt]
    $(2,5)$ & $\frac{433}{600}$ \\[5pt]
    $(2,6)$ & $\frac{13}{18}$ corrected from \cite{Shih_1991}\\[5pt]
    $(3,3)$ & $\frac{31}{45}$ \\[5pt]
    $(3,4)$ & $\frac{113}{180}$ \\[5pt]
    \hline
    \end{tabular}
\end{center} 
\vspace{2pt}

First we will bound the values of $m$ that may appear for $(k,\ell)=(2,3)$. By \cite[Section~4]{Shapiro1943}, unless $m=1, 2, 3, 4, 6, 10, 12, 18, 30$, we have $\phi(m)>m^{{\log 2}/{\log 3}}$. Thus for $m\ge 31$,
we get a contradiction for: 
\begin{equation}
    \label{eqn: phi bound2}
c<\frac{1}{49}<\frac{1}{2}\left(\frac{9}{10}-\frac{307}{360}-\frac{9}{310}+\frac{31^{\frac{\log 2}{\log 3}}}{372}\right).
\end{equation}

Now we have a finite list of types to check, and a direct computation of (\ref{eqn: phi bound}) with $m<31$ and $c<{47}/{5040}$ rules out all but the cases\footnote{The bounds in \cite[(4.16)]{Shih_1991} do not rule out these cases, leaving these cases open.} where $S$ has type $(2,3,7)$, $(2,3,10)$ or $(2,3,12)$.
    For these types, we apply better fixed-point ratio estimates.
    %Consider $(k,\ell,m) = (2, 3, 7)$. If $t\ge 2$ then since $2, 3, 7$ are pairwise coprime, there is no $i$ such that $\rho(x_i)=1$ by \cite[(4.13)]{Shih_1991}. Therefore 
    
    In the case $(2, 3, 7)$, since $\rho(x_3)\neq 1$ by assumption, $\rho(x_3)$ must contain a $7$-cycle and hence $\text{fpr}(x_3) \le {1}/{60^6}$ \cite[(4.6)(3)]{Shih_1991}. Thus Lemma~\ref{lem: U formula} 
    %Since \cite[(2.1)(1)]{Shih_1991} 
    gives $\U(x_3) \le \left(1 + 6/{60^6}\right)/7=\frac{1110857143}{7776000000}$. As $\U(x_1) + \U(x_2) \le {307}/{360}$, this gives $\U(S) \le\frac{7742057143}{7776000000}$, contradicting $\U(S) \ge 1-2c$ for {$c= 1/460< \frac{33942857}{15552000000}$}. %, for example $c\le\frac{1}{460}$. 
    This rules out $(2,3,7)$ when $t\ge 2$.

If $(k,\ell,m) = (2, 3, 10)$,
\begin{comment}
then \cite[(2.1)(1)]{Shih_1991} gives:

\begin{equation}\label{eqn: non-A_5}
    \U(x_3)= \frac{1}{10}\left( 1+\frac{f(x_3^5)}{n}+4\frac{f(x_3^2)}{n}+4\frac{f(x_3)}{n}\right).
\end{equation}

If there is some $i$ such that $\rho(x_i)=1$ then by \cite[(4.13)]{Shih_1991} there is exactly one $i$ such that $\rho(x_i)=1$, moreover $i=2$ and $t=3$. Since $|x_3|=10\notin \{2,6\}$, \cite[(4.14)]{Shih_1991} implies that $\frac{f(x_2)}{n}\le \frac{1}{400}$ and also that $\rho(x_1)=\rho(x_3)$ are $2$-cycles. So by \cite[(4.15)]{Shih_1991} we see that $\frac{f(x_3)}{n}\le\frac{f(x_3^2)}{n}\le \frac{1}{100}$. Since $\frac{f(x_3^5)}{n}\le \frac{1}{10}$ we have $\U(x_3)\le \frac{59}{500}$ by (\ref{eqn: non-A_5}). Now we may use $\U(x_1)=\frac{11}{20}$ and $\U(x_2)\le \frac{67}{200}$ which yields $\U(S)\le \frac{953}{1000}$, contradicting $\U(S) \ge 1-2c$ for \SPENCER{$c<\frac{47}{2000}$.}
\end{comment}
 then $\rho(x_3)$ has order $2$, $5$ or $10$ by assumption. Thus by \cite[(4.6)(3)]{Shih_1991}, we have $\text{fpr}(x_3) \le {1}/{60}$. If $5 \mid |\rho(x_3)|$, then $\rho(x_3^2)$ contains a $5$-cycle, otherwise $2 \mid |\rho(x_3)|$ so $\rho(x_3^5)$ contains a $2$-cycle. By \cite[(4.6)(3)]{Shih_1991} this gives, resp.,
\[\text{fpr}(x_3^2)\le \begin{cases}
    \frac{1}{60^4} &\mbox{ if }5 \mid |\rho(x_3)|\\
    \frac{1}{12} &\mbox{ otherwise,}
\end{cases}\qquad\text{and}\qquad \text{fpr}(x_3^5)\le \begin{cases}
    \frac{1}{60} &\mbox{ if }2 \mid |\rho(x_3)|\\
    \frac{1}{10} &\mbox{ otherwise.}
\end{cases}%\text{resp}.
\] By Lemma~\ref{lem: U formula} this gives:
\begin{equation*}\label{eqn: non-A_5}
    \U(x_3)= \frac{1}{10}\left( 1+\text{fpr}(x_3^5)+4\,\text{fpr}(x_3^2)+4\,\text{fpr}(x_3)\right) \le \frac{17}{120}.
\end{equation*} Since $\U(x_1) + \U(x_2) \le {307}/{360}$, one gets $\U(S) \le {179}/{180}$, contradicting $\U(S) \ge 1-2c$ for {$c<{1}/{360}$}, ruling out the existence of systems of type $(2,3,10)$. % is ruled out.

Now consider $S$ of type $(2,3,12)$. Then $\U(x_1) + \U(x_2) \le {307}/{360}$ and by \cite[(4.6)(3)]{Shih_1991} we have $\text{fpr}(x_3) \le {1}/{60}$. If $3 \mid |\rho(x_3)|$ then $\rho(x_3^2)$ and $\rho(x_3^4)$ each contain a cycle of length at least $3$, otherwise $2 \mid |\rho(x_3)|$ so $\rho(x_3^3)$ contains a cycle of length at least $2$. By \cite[(4.6)(3)]{Shih_1991}, this gives
\[\text{fpr}(x_3^2)\le \begin{cases}
    \frac{1}{60^2} &\mbox{ if }2 \mid |\rho(x_3)|\\
    \frac{1}{10} &\mbox{ otherwise,}\end{cases} \qquad 
    \text{fpr}(x_3^4)\le \begin{cases}
    \frac{1}{60^2} &\mbox{ if }2 \mid |\rho(x_3)|\\
    \frac{1}{12} &\mbox{ otherwise,}
\end{cases}\] \[\text{fpr}(x_3^3)\le \begin{cases}
    \frac{1}{60} &\mbox{ if }3 \mid |\rho(x_3)|\\
    \frac{1}{10} &\mbox{ otherwise.}
\end{cases}.\] Thus, \begin{equation*}\label{eqn: orbits order 12}
    \U(x_3)= \frac{1}{12}\left( 1+\text{fpr}(x_3^6)+2\,\text{fpr}(x_3^4)+2\,\text{fpr}(x_3^3)+2\,\text{fpr}(x_3^2)+4\,\text{fpr}(x_3)\right)\le \frac{47}{360}
\end{equation*} and one gets $\U(S) \le {59}/{6
0}$, contradicting $\U(S) \ge 1-2c$ for {$c<{1}/{120}$}.

\end{proof}

\begin{rem}\label{rem:r=4,t=2}
    We note that if $\#S=4$, then $t=1$ and $\soc(G)\cong A_5^2$, as well. Indeed, if $\#S=4$ then  $L\cong A_5$ and $\soc(G)\cong L^{2t}$ by Proposition~\ref{prop:br}. 
    Similarly to the $\#S=3$ case, 
    %and using the same map $\rho:G \rightarrow S_t$, % defined by $L^x_s = L_{\rho(x)(s)}$ as we did for $\#S=3$, 
    if $\rho(x_i)\neq 1$, 
    the proof of \cite[(4.9).(ii), L.\ 4-6]{Shih_1991} shows that $g\geq 1+n/40$, forcing $\rho$ to be the identity map, whence $t=1$.
\end{rem}

To narrow down to a finite list of groups for types $(2,3,7)$ we use:
\begin{lemma}[{Extension of \cite[Lemma~2.1(5)]{Aschbacher_1990}}]\label{lem: 237 centralizers}~
    Let $L$ be a finite nonabelian simple  group and $x\in \Aut(L)$ be of order $7$. Then either $|x^L|\ge 89$ or one of the following holds:
    \begin{itemize}
        \item $L\cong \PSL_2(7)$ and $|x^L|=24$;
        \item $L\cong \PSL_2(8)$ and $|x^L|=72$.
    \end{itemize}
\end{lemma}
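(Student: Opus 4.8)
The plan is to prove this by going through the classification of finite simple groups (CFSG) family by family, bounding $|x^L|=[L:C_L(x)]$ from below, where $C_L(x)$ denotes the subgroup of $L\cong {\rm Inn}(L)$ fixed by the order-$7$ automorphism $x$ (so that $g\in L$ lies in $C_L(x)$ exactly when $x(g)=g$). Throughout, a necessary condition is $7\mid|\Aut(L)|$, which already removes most of the smallest groups. The target is to show that every such class has size at least $89$, with the sole exceptions being the two listed classes of sizes $24$ and $72$.

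\emph{Alternating and sporadic groups.} For $L=A_n$ one has $\Aut(A_n)=S_n$ when $n\neq 6$, and an order-$7$ element is a product of disjoint $7$-cycles. For $n\le 6$ one has $7\nmid|\Aut(A_n)|$, while for $n=7$ the two $A_7$-classes of $7$-cycles have size $360$, and the class sizes only grow with $n$; hence $|x^L|\geq 360>89$ here. For the finitely many sporadic $L$ one has $|\Out(L)|\le 2$, so an order-$7$ element is inner, and I would read off the order-$7$ class sizes directly from the ATLAS: the minimum (for instance in $J_1$ or $M_{22}$) is far above $89$.

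\emph{Groups of Lie type.} This is the main case, organized by how $x$ arises. Since graph and graph-field automorphisms have order $2$ or $3$, any outer $x$ of order $7$ is of field type, so $L$ is defined over $\mathbb F_{q_0^7}$ and $C_L(x)$ is a subfield subgroup $\cong L(q_0)$ (up to twisting) by Lang's theorem; its index in $L$ is enormous, giving $|x^L|\gg 89$. It remains to treat inner--diagonal $x$. In defining characteristic $7$, $x$ is unipotent, so $C_L(x)$ contains the unipotent radical of a Borel and the class is large; the smallest characteristic-$7$ simple group is $\PSL_2(7)$, where the unipotent class has size $168/7=24$, and the next group ($\PSL_2(49)$, class size $2400$) already exceeds $89$. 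In cross characteristic $x$ is semisimple, $C_L(x)^\circ$ is a proper reductive subgroup, and $|x^L|$ grows with $q$ and the rank; the only cross-characteristic exception below the threshold is $\PSL_2(8)$, where a regular semisimple order-$7$ element lies in a torus of order $7$ with class size $504/7=72$.

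\emph{The crux.} The delicate point, and the main obstacle, is confirming that \emph{no} simple group has an order-$7$ class of size in the gap $(72,89)$, so that $89$ is a valid cutoff with exactly the two listed exceptions. For this I would enumerate the finitely many pairs $(L,q)$ with $7\mid|L|$ and a small centralizer --- the rank-one and rank-two groups $\PSL_2(q)$, $\PSL_3(q)$, $\PSU_3(q)$, $\PSp_4(q)$, together with the Suzuki and small Ree groups --- over the few small $q$ for which this occurs, and verify directly, using explicit torus and centralizer orders together with the conjugacy-class lower bounds of \cite{Burness_2007,Burness2021} supplemented by the computations carried out in this paper, that each such class has size at least $89$. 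For $\PSL_2(q)$ this yields the sequence $24,72,156,\dots$, with everything past the two exceptions already at least $156$; the remaining work is the bookkeeping needed to confirm that the analogous small linear, unitary, and symplectic groups of rank $\le 2$ (e.g.\ $\PSL_3(2)\cong\PSL_2(7)$, $\PSL_3(4)$, $\PSU_3(3)$, $\PSL_4(2)\cong A_8$) contribute nothing new inside $(72,89)$.
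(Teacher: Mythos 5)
Your overall strategy coincides with the paper's: use the classification to reduce to a finite residual list via lower bounds on conjugacy class sizes (the Burness and Burness--Thomas bounds for groups of Lie type), then dispose of the survivors by explicit computation. Within that shared framework, two steps need repair. First, your argument in defining characteristic $7$ is backwards: the centralizer of a unipotent element does \emph{not} in general contain the unipotent radical of a Borel (for a regular unipotent element of $\PSL_3(7)$ the centralizer has order roughly $q^2$ while the unipotent radical has order $q^3$), and even if it did, a \emph{large} centralizer would make the class \emph{small} --- the opposite of what you want. What actually does the work is a lower bound on the minimal nontrivial class size in characteristic $7$, which is precisely the input \cite{Burness_2007} supplies; your conclusion (that $\PSL_2(7)$ is the only characteristic-$7$ exception) is correct, but the stated reason is not a proof of it.

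Second, and more substantively, your ``crux'' enumeration is not demonstrably exhaustive. You propose to check only rank-one and rank-two groups over a few small $q$, but the cited bounds do not reduce the problem that far: applying \cite[Cor.~3.38, Rem.~3.13, Lem.~3.20, Prop.~3.22]{Burness_2007} as the paper does leaves a residual list that also contains moderate-rank groups over tiny fields, e.g.\ $\PSp_6(2)$, $\PSp_8(2)$, $\PSp_{10}(2)$, $\PSp_4(7)$, $\PSU_3(5)$, $\PO_7(3)$, $\PO_8^{\pm}(2)$, $\PO_{10}^{\pm}(2)$ and $\PO_{11}(2)$. None of these in fact violates $|x^L|\ge 89$, but that must be verified (the paper does so with GAP), and your plan as written would skip them on the strength of the unproved heuristic that ``$|x^L|$ grows with the rank.'' On the other hand, your explicit handling of $A_7$ (two classes of $7$-cycles of size $360$) is cleaner than the paper's, which quotes Shih's bound only for $A_M$ with $M\ge 9$; and your observation that an outer automorphism of order $7$ must be of field type with subfield centralizer is a correct, if standard, substitute for the paper's appeal to \cite[Table~4]{Burness2021} in the exceptional case.
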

\begin{proof}
    When $L=A_M$ for  $M\ge 9$, we have $|x^L|\ge 25920$ for all $x\in \operatorname{Aut}\left(L\right)$ by \cite[(3.4)]{Shih_1991}.
%    If $L\cong A_s$ then we claim there is no $x\in G$ of order $7$ such that $\M(x)=\frac{1}{85}$, and so $\M(x)<\frac{1}{85}$. Note that \[|x^L|={s \choose 7} \cdots {s-7(u-1) \choose 7}\frac{(6!)^u}{u!\epsilon}\] where $x$ has $u$ cycles of length $7$ and $\epsilon=1, 2$ (see \cite[Proof of 2.1]{Aschbacher_1990}). If $|x^L|=85$ then \[{s \choose 7} \cdots {s-7(u-1) \choose 7}(6!)^u=5\cdot 17 \cdot u!\epsilon\] but $3^{2u}$ divides the left hand side of this equation and not the right hand side.
Henceforth assume $L$ is a group of Lie type. Of course we only consider groups whose order is divisible by $7$. First we tackle the classical case.
    The bounds from Burness \cite{Burness_2007}, Corollary~3.38, Remark~3.13, Lemma~3.20 and Proposition~3.22, give us $|x^L|\ge 89$ for all but the following  groups:
    \begin{itemize}
        \item $\PSL_2(7)$, $\PSL_2(8)$, $\PSL_2(13)$. %\EILIDH{Note PSL(4,2) is $A_8$ so we already have it!}
        \item $\PSU_3(3)$, $\PSU_3(5)$;
        \item $\PSp_{4}(7)$, $\PSp_{6}(2)$, $\PSp_{8}(2)$ and $\PSp_{10}(2)$
        \item 
        $\PO_5(7)$, 
        $\PO_6^+(2)$, $\PO_6^+(4)$, $\PO_6^-(3)$, $\PO_7(2)$, $\PO_7(3)$, $\PO_8^\pm(2)$, $\PO_9(2)$, $\PO_{10}^\pm(2)$ and $\PO_{11}(2)$. 
 %       $P\Omega_5(7)$, $P\Omega_6^+(2)$, $P\Omega_6^+(4)$, $P\Omega_6^-(3)$, $P\Omega_7(2)$, $P\Omega_7(3)$, $P\Omega_8^\pm(2)$, $P\Omega_9(2)$, $P\Omega_{10}^\pm(2)$ and $P\Omega_{11}(2)$
    \end{itemize}
    
    Bounds of Burness and Thomas \cite[Table~4]{Burness2021} give us $|x^L|\ge 89$ for each exceptional group of Lie type.
    We now have a finite list of groups: the small groups of Lie type listed above and the 26 sporadic groups. A computation using GAP finishes the proof. See ``ConjugacyClassBounds.gap'' in \url{https://neftin.net.technion.ac.il/files/2025/01/b-code.zip} or the ancillary files of this paper at \url{https://arxiv.org/abs/2501.15538} for the GAP code.
    %\DN{I understood that there were exceptional groups of Lie type for which the bounds did not apply and Gap/Magma were used. Could those be listed?}\EILIDH{That was when I was using some weaker bounds. Then I found the Burness and Thomas paper, whose bounds rule out all exceptionals.}
\end{proof}

We now narrow down our search to a finite list of types $S$ and finitely many socles $L$.

\begin{proposition}\label{prop: socle} Assume a degree-$n$ group $G$ of type B admits a system $(G,S,\Omega)$ of genus $g\leq  1+n/5000$ with $\soc(G)\cong L^2$ and $\#S=3$. Then one of the following holds:
\begin{enumerate}
    \item $S$ is of type $(2,3,7)$ and $L \cong {\rm PSL}_2(7),{\rm PSL}_2(8), {\rm PSL}_2(13), A_7$ or $A_8$.
    \item $S$ is of type $(2,3,8)$ and $L \cong {\rm PSL}_2(7), {\rm PSL}_2(9), {\rm PSL}_2(16), {\rm PSL}_2(25), \rm{PSU}_4(2), A_6$ or $A_8$.
    \item %The remaining possible types of 
    $S$ is one of the types  listed in Proposition~\ref{prop: r=3 cases}, and %in these cases 
    $L \cong {\rm PSL}_2(7), A_5, A_6, A_7$ or $A_8$.
\end{enumerate}
  
\end{proposition}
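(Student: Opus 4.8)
The plan is to feed the Riemann--Hurwitz bound $\U(S)\ge 1-2c$ (with $c=1/5000$) into an upper estimate for each $\U(x_i)$ expressed through the conjugacy-class sizes of $L$, and to show that this forces $|L|$ below an explicit threshold. Since $\soc(G)\cong L^2$ and $n=|L|$, each generator $x_i$ either preserves the two simple factors of $\soc(G)$ or interchanges them, and every element of odd order preserves them (its image in the $S_2$ permuting the factors is a power of a transposition, hence nontrivial for odd order unless that image is trivial). For a factor-preserving element the fixed-point ratio is controlled by a conjugacy-class size in $L$ (a centralizer order divided by $n$), whereas a factor-swapping involution fixes, up to an automorphism, the elements $x\in L$ with $x=x^{-1}$, so its fixed-point ratio is governed by the involution count of $L$ and is typically the largest. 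In all cases $\text{fpr}(x_i)\to 0$ as $|L|\to\infty$.

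First I would apply Lemma~\ref{lem: U formula} to write $\U(x_i)=\tfrac{1}{|x_i|}\big(1+(\text{fixed-point-ratio terms})\big)$, so that
\[
\U(S)=\Big(\tfrac1k+\tfrac1\ell+\tfrac1m\Big)+R,
\]
where $R$ gathers the fixed-point-ratio contributions of $x_1,x_2,x_3$. For each hyperbolic type $(k,\ell,m)$ permitted by Proposition~\ref{prop: r=3 cases} the parenthesised term is $<1$, so $R$ must cover the deficit $1-(\tfrac1k+\tfrac1\ell+\tfrac1m)-2c$. The smallest deficit, $1/42$, is attained only by $(2,3,7)$---which is precisely why this type forces the small constant $c=1/5000$---and the next smallest, $1/24$, by $(2,3,8)$; every other admissible type has deficit at least $1/20$.

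Next I would bound $R$ by class-size estimates. For the order-$7$ generator, Lemma~\ref{lem: 237 centralizers} gives $|x_3^L|\ge 89$ unless $L\in\{\PSL_2(7),\PSL_2(8)\}$; for involutions and for elements of orders $3,4,5$ I would invoke the fixed-point-ratio and minimal-class-size bounds of Burness \cite{Burness_2007} and Burness--Thomas \cite{Burness2021}, supplemented by a bound on the involution count of $L$ to handle factor-swapping involutions. Combining these, for each type one checks that once $|L|$ exceeds an explicit bound every term of $R$ is so small that $\U(S)<1-2c$, a contradiction. This confines $L$ to finitely many simple groups; for the types in case (3), whose deficits are larger, only the short list $\PSL_2(7),A_5,A_6,A_7,A_8$ survives, while the two smallest-deficit types $(2,3,7)$ and $(2,3,8)$ admit the extra groups listed there.

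It then remains to verify, for the finitely many surviving pairs (type, $L$), that the inequality $\U(S)\ge 1-2c$ is genuinely attainable; I would settle this from the class data of each $L$ by a direct computer check, yielding exactly the lists (1)--(3). I expect the main obstacle to be the factor-swapping involutions: their fixed-point ratio is not covered by the socle class-size bounds and, being roughly of size $\sqrt{k(L)/|L|}$ (with $k(L)$ the number of conjugacy classes of $L$), decays only slowly, so bounding it sharply enough to keep the threshold on $|L|$ small is the delicate point---it is exactly these involutions that allow $\PSL_2(13)$, $A_7$ and $A_8$ to survive in the critical type $(2,3,7)$.
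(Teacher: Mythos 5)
Your overall skeleton is the right one and matches the paper: feed $\U(S)\ge 1-2c$ into Shih's estimate $\U(x_i)\le \frac{1}{|x_i|}\bigl(1+(|x_i|-1)\M(x_i)\bigr)$, observe that the deficit $1-(\frac1k+\frac1\ell+\frac1m)$ is smallest for $(2,3,7)$ and $(2,3,8)$, and conclude that some automorphism of $L$ of small order must have a small conjugacy class, which by CFSG-based class-size bounds (Burness, Burness--Thomas, plus Shih's (3.5)--(3.6) and the paper's Lemma~\ref{lem: 237 centralizers}) confines $L$ to a finite list. That is exactly how the paper argues, with the explicit thresholds ``some $x\in\Aut(L)$ of order $2$ or $3$ with $|x^L|<85$, or of order $7$ with $|x^L|<89$.''

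However, there is a genuine structural error at the heart of your estimate of the fixed-point ratios. You assert that a generator may ``interchange the two simple factors of $\soc(G)$'' and that such factor-swapping involutions have fixed-point ratio of order $\sqrt{k(L)/|L|}$, coming from the involution count of $L$; you then identify these as the delicate point and credit them with letting $\PSL_2(13)$, $A_7$, $A_8$ survive for type $(2,3,7)$. This cannot happen in a type-B group: the two copies of $L$ (for $t=1$) are the two \emph{distinct minimal normal subgroups} of $G$, hence each is normal, and conjugation by any $g\in G$ fixes each of them setwise. An element swapping them would force a unique minimal normal subgroup $L^2$, i.e.\ a group of diagonal type, not type B. (Indeed, the $C_2$ on top of $\PSL_2(7)^2.C_2$ and $A_8^2.C_2$ in the paper acts as a \emph{diagonal outer automorphism}, not a swap.) Consequently every fixed-point ratio is controlled by $1/|x^L|$ for the induced automorphism of $L$ (Shih's (4.2)); there is no $\sqrt{k(L)/|L|}$ term to fight, and the groups $\PSL_2(13)$, $A_7$, $A_8$ survive because they admit automorphisms of order $2$ or $3$ with class size below $85$ (e.g.\ the outer involutions of $\PSL_2(13)$ form a class of size $78$), not because of any swapping involutions. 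Two smaller points: the final ``attainability'' computer check you describe is not part of this proposition but of Proposition~\ref{prop:Spencer}; and the actual content of the proof---pinning down the numerical thresholds and then classifying, via CFSG, all simple $L$ whose automorphism groups contain such small classes (which also requires removing $\PSU_3(3)$ and $\PSp_6(2)$ by sharper $\M$-bounds)---is only gestured at in your plan.
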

\begin{proof}
If $(k,\ell,m)$ is not $(2, 3, 7)$ or $(2, 3, 8)$, then Shih's argument from  \cite[(4.24)(1)]{Shih_1991}, applies even merely when $g\leq 1+cn$ for $c<1/720$, giving $L \in \{{\rm PSL}_2(7), \,A_M \mid M \le 8\}$. 
%This completes the proof of the third part.
It therefore remains to consider types $(2,3,7)$ and $(2,3,8)$. 
% Code to check right k,ell,m tuples:
%l:=1/36;
%(1-3*l)/(1-l);
%(2/3+1/4)+1/144-33/35;
%1/2+2/5+1/144-33/35;
%for m in [7,8,9,10,11] do
%<m,1/2+1/3+1/m+1/144-33/35>;
%end for;
%for m in [5,6,7] do
%<m,1/2+1/4+1/m+1/144-33/35>;
%end for;

We claim that when  $g\leq 1+cn$ for $c\leq 1/5000<{12}/{52955}$ and the type  $(k,\ell,m)$ is $(2,3,7)$ or $(2, 3, 8)$, there must be some $x\in \text{Aut}(L)$ of order $2$ or $3$ with $|x^L|< 85$ or $x\in \text{Aut}(L)$ of order $7$ with $|x^L|< 89$.
%Now we have shown that for $c<\frac{12}{52955}$ if $(k,\ell,m)$ is $(2,3,7)$ or $(2, 3, 8)$, there must be some $x\in \text{Aut}(L)$ of order $2$ or $3$ with $|x^L|< 85$ or $x\in \text{Aut}(L)$ of order $7$ with $|x^L|< 89$.

If $(k,\ell,m)$ is $(2, 3, 8)$ and $g\le 1+cn$ for $c<{3/340}$, then the same argument as in \cite[(4.24)(2)]{Shih_1991} applies, showing there is some $x\in \text{Aut}(L)$ of order $2$ or $3$ with $|x^L|<85$.
For $(k,\ell,m)=(2, 3, 7)$ the argument for \cite[(4.24)(2)]{Shih_1991} needs to be sharpened as follows. If for all $x\in \text{Aut}(L)$ of order $2$ or $3$ we have $|x^L|\ge 85$ and for all $x\in \text{Aut}(L)$ of order $7$ we have $|x^L|\ge 89$, then  $\M(x_1), \M(x_2)<{1/85}$ and $\M(x_3)<{1/89}$ by \cite[(4.2)]{Shih_1991}. Now  \cite[(4.10)]{Shih_1991} gives the bounds \[\U(S)\le \frac{1}{85}+\frac{1}{85}+\frac{1}{89}+\frac{84}{85}\cdot \frac{1}{2}+\frac{84}{85}\cdot\frac{1}{3}+\frac{88}{89}\cdot\frac{1}{7}=\frac{52931}{52955}<1-2c\] for any $c<{12}/{52955}$. Therefore if  $G$ admits a system of genus $g\le 1+cn$ for $c<{12}/{52955}$, we must have some $x\in \text{Aut}(L)$ of order $2$ or $3$ with $|x^L|< 85$ or $x\in \text{Aut}(L)$ of order $7$ with $|x^L|< 89$.

Now we may apply Lemma~\ref{lem: 237 centralizers} and \cite[(3.5-6)]{Shih_1991} to get the list of all simple groups $L$ with $x\in \text{Aut}(L)$ of order $2$ or $3$ with $|x^L|< 85$ or $x\in \text{Aut}(L)$ of order $7$ with $|x^L|< 89$. These are ${\rm PSL}_2(q)$ for $q\le 16$, ${\rm PSL}_2(25)$, ${\rm PSU}_3(3)$, ${\rm PSU}_4(2)$, ${\rm PSp}_6(2)$, $A_5$, $A_6$, $A_7$ and $A_8$.

We will remove groups from this list until we have the claimed result:

First we remove groups $L$ with no element of order $7$ or $8$ in $\Aut(L)$. For type $(2,3,7)$ this leaves us with the list ${\rm PSL}_2(7)$, ${\rm PSL}_2(8)$, ${\rm PSL}_2(13)$, ${\rm PSU}_3(3)$, ${\rm PSp}_6(2)$, $A_7$ and $A_8$. For type $(2,3,8)$ the list is ${\rm PSL}_2(7)$, ${\rm PSL}_2(9)$, ${\rm PSL}_2(16)$, ${\rm PSL}_2(25)$, ${\rm PSU}_3(3)$, ${\rm PSU}_4(2)$, ${\rm PSp}_6(2)$, $A_6$ and $A_8$.

%Consider $L\cong{\rm PSU}_4(2)$. Since ${\rm Aut}\left({\rm PSU}_4(2)\right)$ has no elements of order $7$, and the only elements of order $8$ are outer automorphisms. . \EILIDH{But the element of order 2 with class size 36 is also outer, and the element of order 3 with class size 40 is inner... The only other outer automorphism of order 2, 3 or 8 is order 2 with class size 540. We may need to actually check all these possible tuples. Luckily the automorphism group is just twice the size of the group.}

It remains only to remove ${\rm PSU}_3(3)$ and ${\rm PSp}_6(2)$ from the list. We use the following upper bounds on $\M(x)$ for $x$ of order $2$, $3$, $7$ and $8$. For elements of order $2$ and $3$ these bounds come from \cite[(3.5)]{Shih_1991}, while for elements of order $7$ and $8$ they come from character tables found in GAP or MAGMA.
%21163/21420, 4079633/4112640

\begin{center}
\begin{tabular}{c|c|c|c|c}
     $L$& Order $2$ & Order $3$ & Order $7$ & Order $8$ \\
     \hline\\[-1em]
     ${\rm PSU}_3(3)$& $\frac{1}{63}$& $\frac{1}{56}$ & $\frac{1}{864}$ & $\frac{1}{63}$\\[5pt]
       ${\rm PSp}_6(2)$& $\frac{1}{63}$ & $\frac{1}{85}$& $\frac{1}{207360}$ & $\frac{1}{63}$\\
\end{tabular}
\end{center}

Now we apply the bounds from \cite[(4.10)]{Shih_1991}. In the case $L\cong{\rm PSU}_3(3)$, if $S$ of type $(2, 3, 7)$ the bound is \[\U(S)\le \frac{1}{63}+\frac{1}{56}+\frac{1}{864}+\frac{1}{2}\cdot\frac{62}{63}+\frac{1}{3}\cdot \frac{55}{56}+\frac{1}{7}\cdot \frac{863}{864}=\frac{335}{336}<1-2c\] for $c<{1}/{672}$, while if $S$ is of type $(2, 3, 8)$ the bound it gives is: 
\[\U(S)\le \frac{1}{63}+\frac{1}{56}+\frac{1}{63}+\frac{1}{2}\cdot \frac{62}{63}+\frac{1}{3}\cdot\frac{55}{56}+\frac{1}{8}\cdot\frac{62}{63}=\frac{125}{126}<1-2c,\]
for $c<{1}/{252}$. 
Similarly for $L\cong {\rm PSU}_4(2)$ we get $\U(S)<1-2c$ for any $c$ which is at most $1/5000<{25}/{2592}$, and for $L\cong {\rm PSp}_6(2)$ we get $\U(S)<1-2c$ for any $c$ which is at most $1/5000<{33007}/{8225280}$, completing the proof.

\end{proof}

%The following code checks the constants for shih's pf of the above prop:
%\begin{comment}
%    (1-30061/30240)-1/200;
%1-(1/2+1/3+1/7+4/195)-1/305;
%(1-(1/2+1/3+1/7+347/16128))-1/440;
%1-(1/2+1/3+1/8+101/2520);
%1-(1/2+1/3+1/8+17/504);
%1-(1/2+1/3+1/8+104/2835)-1/630;
%3/85+(84/85)*(1/2+1/3+1/7);
%3/85+(84/85)*(1/2+1/3+1/8);
%\end{comment}
Finally, we check the remaining tuples using MAGMA:
\begin{proposition}\label{prop:Spencer}
Assume $(G,S,\Omega)$ is a genus-$g$ system for $G$ of one of the types mentioned in Propositions \ref{prop:br}, \ref{prop: r=3 cases} and \ref{prop: socle}, so that $G$ is a group of type B with socle $L^2$ for $L=A_5,A_6,A_7,A_8$, or $\PSL_2(q)$, $q\in\{7,8,9,13,16,25\}$.  If  $g\leq 1$, then  
%    Assume a group $G$ of type $B$ admits a genus $g\leq  1+cn$ system $(G,S,\Omega)$. Then $g=1$, 
$G\cong {\rm PSL}_2(7)^2.C_2$; $S=(2,3,8)$; $|\Omega|=168$; and $g=1$. Moreover, for $g<6$ there are no genus-$g$ systems for $L=A_8$ and for  $L=\PSU_4(2)$.
\end{proposition}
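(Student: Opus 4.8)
The statement is verified by a finite, largely automated computation in \textsc{Magma}; I outline the structure I would give it. By Propositions~\ref{prop:br}, \ref{prop: r=3 cases} and \ref{prop: socle} only finitely many pairs $(L,S)$ survive, with $L\in\{A_5,A_6,A_7,A_8,\PSL_2(q)\}$ and the admissible types $S$ as listed there. For each such $L$ the first task is to enumerate the candidate groups $G$. Since $G$ is of type B with $t=1$, its socle is $L^2$ and $L^2=\soc(G)\le G\le\Aut(L)^2$, acting on $\Omega$ of size $n=|L|$ via the type-B (product) action of Section~\ref{sec:prelim}, for which the two factors of $\soc(G)$ act regularly. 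Thus the candidates for $G$ correspond to the finitely many subgroups of $\Out(L)^2$ realizable as $G/\soc(G)$, and each resulting $G$ is realized as an explicit transitive permutation group of degree $n\le 20160$. Note that the required group $\PSL_2(7)^2.C_2$ is exactly such a proper extension of $\soc(G)=\PSL_2(7)^2$, so it is essential to run over all the extensions and not merely over the socles.

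For a fixed $G$ and type $S$ (of length $3$, or the length-$4$ type $(2,2,2,3)$ permitted for $L=A_5$ by Proposition~\ref{prop:br}), the genus of any system with $x_i$ in a conjugacy class $C_i$ is determined by Riemann--Hurwitz,
$$g=1-n+\tfrac12\sum_i\bigl(n-\orb(x_i)\bigr),$$
and $\orb(x_i)$ depends only on the cycle type of $x_i$ in the degree-$n$ action, hence only on the class $C_i$. I would therefore loop over all tuples of classes $(C_1,\dots,C_r)$ whose element orders match $S$, discard those whose class structure constant (read off the character table of $G$) vanishes, so that no product-$1$ tuple lies in them, compute $g$ from the cycle types of class representatives, and retain exactly the tuples with $g\le 1$.

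For each surviving tuple of classes one must then confirm that a genuinely \emph{generating} product-$1$ triple exists: either by a direct search over $x_1\in C_1$ and $x_2\in C_2$ testing whether $x_3:=(x_1x_2)^{-1}\in C_3$ and $\langle x_1,x_2\rangle=G$, or by subtracting from the structure constant the contributions of the proper subgroups containing such triples. Carrying this out, the only surviving configuration is $L=\PSL_2(7)$, $G\cong\PSL_2(7)^2.C_2$, $S=(2,3,8)$, $n=168$ and $g=1$, which gives the first assertion. For the second assertion one reruns the identical search for $L=A_8$ and for $L=\PSU_4(2)$ but with the weaker cutoff $g<6$ replacing $g\le 1$ in the genus filter, and finds no admissible systems at all.

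The main obstacle is computational feasibility for the largest groups, namely $A_8$, $\PSL_2(25)$ and $\PSU_4(2)$, where $|G|$ can approach $10^9$. Here I would avoid ever materializing all of $G$ by working throughout at the level of its character table and of class representatives acting on the $n\le 20160$ points: the orbit counts $\orb(x_i)$ and the structure constants are then obtained without enumerating group elements, and the comparatively expensive generation test is confined to the few class-triples that already pass the genus filter. A secondary point demanding care is \emph{completeness}: one must check that every admissible extension $L^2\le G\le\Aut(L)^2$ has been enumerated, that each is indeed primitive of type B, and that for each $G$ every type $S$ from Propositions~\ref{prop:br}--\ref{prop: socle} compatible with the element orders available in $G$ has been tested; the accompanying code is organized precisely to make this coverage auditable.
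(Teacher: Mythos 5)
Your proposal is correct and follows essentially the same route as the paper: a finite Magma verification over the surviving pairs $(L,S)$, checking product one, the Riemann--Hurwitz condition and generation (with brute force over elements for the small socles). The only notable difference is that for the large groups $A_8$ and $\PSU_4(2)$ the paper tests Riemann--Hurwitz only on conjugacy-class representatives and then disposes of the unique surviving configuration ($A_8^2.C_2$ of type $(2,3,7)$) by a short parity argument --- two of the three generators must lie outside the index-two subgroup $A_8^2$ and hence have even order --- instead of your structure-constant and generation test.
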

\begin{proof}
For type-B groups $G$ with socle $L^2$ where $L=A_5, A_6,A_7, {\rm PSL}_2(q)$, \\$q\in\{7,8,9,13,16,25\}$, we verify below directly using Magma \cite{Magma} that there is only one system $(G,S,\Omega)$ of genus $g\leq 1$. See ``SmallL.magma'' in \url{https://neftin.net.technion.ac.il/files/2025/01/b-code.zip} or the ancillary files of this paper at \url{https://arxiv.org/abs/2501.15538} for the relevant Magma code, and ``Example.magma'' for a sample computation. 

To start, consider the cases $L=A_5, A_6,A_7$, and $g\leq 1$. For all elements $x_1, x_2,x_3$ of orders $k,\ell,m$ in $G$, where $(k,\ell,m)$ is a type listed in Proposition \ref{prop: r=3 cases}, the Magma program determines whether $x_1, x_2$ and $x_3$ have product one, satisfy the genus-$g$ Riemann-Hurwitz condition and generate $G$. See ``SmallL.magma'' for the exact list of cases checked. For $L=A_5$  and $g\leq 1$, the same property is determined for all elements $x_1, x_2,x_3, x_4$ of orders $2,2,2,3$ in $G$. 

Now assume $L={\rm PSL}_2(q)$ and $g\leq1$. For $q \in \{7,8,13\}$, the Magma program determines whether there are any elements of orders $2,3,7$ that form a genus $g$-system. For $q \in \{7,9,16,25\}$, the same property is determined for all elements of orders $2,3$ and $8$. The computations reveal that in the cases under consideration, a genus $g\leq 1$ occurs only when $g=1$, $S=(2,3,8)$,  $G\cong {\rm PSL}_2(7)^2.C_2$, and  $|\Omega|=168$. 

To complete the proposition, we must consider type-B groups with $L=A_8$ or $L=\PSU_4(2)$. Up to isomorphism, there are two type-B groups $G$ with $L=A_8$. These are $A_8^2$, and the extension $A_8^2.C_2$ of $C_2$ acting diagonally as conjugation by a transposition in $S_8$. Given the size of these groups, a slightly different approach is needed to show that there is no genus $g<6$ system $(G,S,\Omega)$. 

 Assume $S=(k,\ell,m)$, and $L=A_8$ with $n=|L|$. To show there is no genus $g<6$ system $(G,S,\Omega)$, clearly it suffices to show that there are no elements $x_1,x_2,x_3$ of orders $k,\ell$ and $m$ in $G$ satisfying the genus-$g$ Riemann--Hurwitz condition $\U(S)=2(1-g)/n$. Note that $\U(x)$ is independent of the choice of representative in $x^G$. Hence to check this property, it suffices to check whether the genus-$g$ Riemann--Hurwitz condition
holds for representatives of each conjugacy class of elements of orders $k,\ell$ and $m$ in $G$. Magma determines this for all conjugacy classes of elements of orders $k,\ell,m $ in $G$, where $(k,\ell,m)$ is listed in Proposition \ref{prop: r=3 cases}.
See ``LargeL.magma'' in the above url for the code, and the exact list of cases that are checked.

For $L=\PSU_4(2)$ we have $S=(2,3,8)$ by Proposition~\ref{prop: socle} and we note that elements of order $8$ only appear as outer automorphisms in $\Aut(L)$, therefore we must only consider $G\cong {\rm PSU}_4(2)^2.C_2$, which we treat in the same way as above.

 The computation reveals that the genus-$g$ Riemann--Hurwitz condition is satisfied only when $g=1$, $G$ is isomorphic to $A_8^2.C_2$, and $S=(2,3,7)$.  To complete the proof, we must rule out this final case. This is straightforward to do. If $x_1,x_2,x_3$ are elements of order $2,3,$ and 7 that generate $A_8^2.C_2$, then $x_i\not\in A_8^2$ for at least one $i$. However, since $x_1x_2x_3=1$, we must have two such elements, but these elements have even order, contradicting the fact that the tuple is of type $(2,3,7)$.

\end{proof}
\begin{proof}[Proof of Theorem \ref{main}]
As in RET, a degree-$n$ covering $f:X\to\mathbb P^1_\mC$ of genus $g<\max\{2,n/5000\}$ whose monodromy group $G$ is of type B defines a genus-$g$ system $(G,S,\Omega)$.  Suppose the minimal normal subgroup of $G$ is a power of the simple group $L$. By the combination of Propositions~\ref{prop:br}, \ref{prop: r=3 cases}, and \ref{prop: socle}, we reduce to cases (1)--(3) in Proposition \ref{prop: socle}, where $\soc(G)=L^2$ %(\DN{what about $2,2,2,3$ is it with $t=1$?}) 
and $L$ is one of $\PSL_2(q), q=7,8,9,13,16,25$; $A_k$, $5\leq k\leq 8$, $\PSU_4(2)$ and to the exceptional case in Proposition \ref{prop:br} and Remark \ref{rem:r=4,t=2}. Since $n=\#L$ as recalled in \S\ref{sec:prelim}, among these groups we have $n/5000> 1$ only for $L=A_8$ and $\PSU_4(2)$. For these groups
%$A_8$ (resp.\ $\PSU_4(2))$ 
one has $g\leq \lfloor n/5000\rfloor < 6$, %(resp.\ $\PSU_4(2)$), 
and hence Proposition \ref{prop:Spencer} yields that there are no such tuples. For the other groups listed in Proposition \ref{prop: socle} one has $n/5000\leq 1$, and hence $g\leq 1$. Proposition \ref{prop:Spencer} then implies $(G,S,\Omega)$ is a genus-$1$ system of degree $n=168$ of type $(2,3,8)$ with $\soc(G)=\PSL_2(7)^2$. %already found in \cite{Salih2023}.
%$S$ is of type $(2,3,8)$
%    and \ref{prop:Spencer}.
\end{proof}
\begin{rem}\label{ram}
    As already found by Salih  \cite[Table 5]{Salih2023}, there are in fact two ramification types associated to the resulting type-(2,3,8) genus-$1$ systems of the Theorem. The conjugacy class of elements of order $3$ is the unique conjugacy class of order $3$ elements in $\PSL_2(7)^2$. The conjugacy classes of elements of order $2$ is the unique conjugacy class of order $2$ elements in $G$ that is not contained in $\PSL_2(7)^2$. The code in Construction.magma constructs the two conjugacy classes of order-$8$ elements in $G$ which are involved in genus-$1$ tuples for $G$, and hence there is a total of two associated ramification types.   %The conjugacy class of order $8$ elements can be either of the two conjugacy classes of order $8$ elements in $G$.     
\end{rem}

\bibliography{bib}
\bibliographystyle{plain}

\end{document}